\documentclass{amsart}


\usepackage[T1]{fontenc}
\usepackage[usenames,dvipsnames]{xcolor}
\usepackage{graphicx}
\usepackage{ifthen}
\usepackage{subcaption}
\usepackage{lineno}
\usepackage{mathrsfs}
\usepackage{ulem}
\usepackage[shortlabels]{enumitem}

\usepackage{hyperref}
\hypersetup{
    colorlinks=true,
    linktoc=all,
    linkcolor=blue
}

\usepackage{tikz}
\usetikzlibrary{decorations.pathmorphing}

\usepackage{textcomp}
\usepackage{amsfonts}
\usepackage{amssymb}
\usepackage{amsthm}
\usepackage{stmaryrd}

\newcommand{\reals}{\mathbb{R}}

\newcommand{\integers}{\mathbb{Z}}

\newtheorem*{corollary*}{Corollary}
\newtheorem*{theorem*}{Theorem}
\newtheorem{theorem}{Theorem}[section]
\newtheorem{lemma}[theorem]{Lemma}

\newtheorem{corollary}[theorem]{Corollary}
\newtheorem{observation}[theorem]{Observation}

\newtheorem*{claim*}{Claim}

\theoremstyle{definition}
\newtheorem{definition}[theorem]{Definition}

\newtheorem*{example*}{Example}
\newtheorem*{convention*}{Convention}

\theoremstyle{remark}

\newboolean{showcomments}
\setboolean{showcomments}{true}

\newcommand{\kjcomment}[1]%
    {\ifthenelse{\boolean{showcomments}}%
        {\textcolor{blue}{(N.B:[KJ] #1)}}{}}
\newcommand{\gkcomment}[1]%
    {\ifthenelse{\boolean{showcomments}}%
        {\textcolor{ForestGreen}{(N.B:[GK] #1)}}{}}
\newcommand{\kjcommentm}[1]%
    {\ifthenelse{\boolean{showcomments}}%
        {\marginpar{\textcolor{blue}{(N.B:[KJ] #1)}}}{}}
\newcommand{\gkcommentm}[1]%
    {\ifthenelse{\boolean{showcomments}}%
        {\marginpar{\textcolor{ForestGreen}{(N.B:[GK] #1)}}}{}}
\newcommand{\comment}[1]{\ifthenelse{\boolean{showcomments}}{\textcolor{red}{(N.B: #1)}}{}}

\begin{document}

\renewcommand{\bf}{\bfseries}
\renewcommand{\sc}{\scshape}
\vspace{0.5in}

\title{On the asymmetry of stars at infinity}

\author{Keith Jones}
\address{Department of Mathematics, Computer Science \& Statistics; State University of New York College at Oneonta; 108 Ravine Parkway; Oneonta, NY 13820}
\email{keith.jones@oneonta.edu}

\author{Gregory A. Kelsey}
\address{Department of Mathematics; Bellarmine University; 2001 Newburg Rd.; Louisville, KY 40205}
\email{gkelsey@bellarmine.edu}

\subjclass[2010]{Primary 20F65, 20F69; Secondary 20E22, 05C25}

\keywords{stars at infinity, horofunction, horoboundary, Diestel-Leader graphs, lamplighter groups}

\thanks{The authors would like to thank the Institute for Advanced Study for
its hospitality at the Park City Math Institute Summer Session 2012, where
began conversations ultimately leading to this work. }


\begin{abstract} Given a bordified space, Karlsson defines an incidence
geometry of stars at infinity.  These stars and their incidence are closely
related to well-understood objects when the space is hyperbolic, CAT(0), or a
bounded convex domain with the Hilbert metric. A question stemming from
Karlsson's original paper was whether or not the relation of one boundary point
being included in a star of another boundary point is symmetric.
This paper provides an example demonstrating that this relation in the star
boundary of the three-tree Diestel-Leader graph $DL_3(q)$ is
not symmetric. In doing so, some interesting bounds on distance in
Diestel-Leader graphs are utilized.
\end{abstract}

\maketitle

\section{Introduction}

In \cite{karlssondynamics}, Karlsson presents a theory on the dynamics of
isometries and semicontractions of metric spaces in which he develops and
utilizes the idea of ``stars at infinity'' around boundary points of
bordified metric spaces, which essentially extend the notion of half-space to the
boundary of the space.
For example in a CAT(0) space, the star of a boundary point is the closed ball
of radius $\pi/2$ in the angular metric.
In a hyperbolic space, stars are singleton boundary points.

For a metric space $X$ with boundary $\partial X$, the star of a boundary
point $\eta$ is denoted $S(\eta)$.  Karlsson notes in Section 2.1 of
\cite{karlssondynamics} that, for $\eta, \xi \in \partial X$, it is unclear
whether, or under what conditions, $\xi \in S(\eta)$ implies $\eta \in S(\xi)$;
i.e., whether the relation of being included in the star is symmetric.  In this
paper, we exhibit an example in which this relation is not symmetric by
studying the horofunction boundary of the Diestel-Leader graph $DL_3(q)$ (to be
introduced in \S \ref{dlgraph:sec}), which is a Cayley graph of a kind of
generalization of the lamplighter group $L_2$. It should be noted that this
example lives outside the context of non-positively curved spaces. 

\section{Stars at Infinity}
\label{boundary:sec}

\subsection{Background}

Karlsson introduces the following ideas in \cite{karlssondynamics}. Let $(X,x_0)$ be a based metric space. 
\begin{definition}
The {\it halfspace} for any $W\subset X$ with constant $C\geq0$ is given by:
$$H(W,C) = \{z\ |\ d(z,W)\leq d(z,x_0)+C\}.$$
For any bordification $\partial X$ of $X$, and any $\xi \in \partial X$ with
neighborhood basis $\mathscr U$, the {\it star} of $\xi$ is given by:
$$S(\xi) = \overline{\bigcup_{C\geq0}\bigcap_{U \in \mathscr
U}\overline{H(U,C)}}.$$
This is independent of choice of basepoint for $X$ and of neighborhood basis
for $\xi$. One can also consider the star of $\xi$ based at $x_0$, defined by:
\[ S^{x_0}(\xi) = \bigcap_{U \in \mathscr U} \overline{H(U, 0)}, \]
and note that $S^{x_0}(\xi) \subseteq S(\xi)$. 
\end{definition}

Bridson \& Haefliger provide an introduction to the horofunction boundary of a
metric space in \cite{bh}, 8.12. We provide a brief overview here.
Any based metric space $(X,x_0)$ has a natural embedding into the space
$\mathscr{C}_0(X)$ of continuous functions $X \rightarrow \reals$ with $f(x_0)
= 0$, via the mapping $x \in X \mapsto f_x(z) = d(x,z) - d(x,x_0)$. 
We give $\mathscr{C}_0(X)$ the compact-open topology and consider the closure
$\overline{X}$ of $X$ in this space. This closure is compact when $X$ is
proper, as is the space $\partial X = \overline{X} \backslash X$, which we call
the {\it horofunction boundary} of $X$. 

\begin{definition}
The {\it horofunction} $f: X\rightarrow \reals$ defined by a sequence $(x_n)$
is given by:
$$f(z) = \lim d(x_n, z) - d(x_n, x_0).$$
\end{definition}

If $(x_n)$ lies along a geodesic ray with $d(x_n,x_0) = n$, we call the induced 
horofunction a {\it Busemann function}.

\subsection{A lemma about star-inclusion} 
We make the following observation about stars.

\begin{lemma}
\label{seqstarinc:lem}
Let $X$ be a bordified metric space with basepoint $x_0$ and boundary $\partial X$.
Let $(x_n)$ and $(y_n)$ be sequences approaching points $\bar x$ and $\bar y$,
respectively, in $\partial X$. If for each $n$, $d(x_n,y_n) \leq
d(x_n,x_0)$, then $\bar x \in S^{x_0}(\bar y) \subseteq S(\bar y)$.
\end{lemma}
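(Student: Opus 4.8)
The plan is to verify directly that $\bar x$ lies in every closed halfspace $\overline{H(U,0)}$ as $U$ ranges over a neighborhood basis $\mathscr U$ of $\bar y$, since this is precisely the condition $\bar x \in S^{x_0}(\bar y) = \bigcap_{U \in \mathscr U} \overline{H(U,0)}$; the final inclusion $S^{x_0}(\bar y) \subseteq S(\bar y)$ is already recorded in the definition of the star, so it needs no further argument.

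First I would fix an arbitrary $U \in \mathscr U$. Since $(y_n)$ approaches $\bar y$, there is an index $N$ with $y_n \in U$ for all $n \geq N$. For such $n$ the point $y_n$ witnesses $d(x_n, U) \leq d(x_n, y_n)$, and combining this with the hypothesis $d(x_n, y_n) \leq d(x_n, x_0)$ yields $d(x_n, U) \leq d(x_n, x_0)$, which is exactly the defining inequality for membership in the halfspace with constant $C = 0$. Hence $x_n \in H(U,0) \subseteq \overline{H(U,0)}$ for every $n \geq N$.

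It then remains only to pass to the limit. The set $\overline{H(U,0)}$ is closed in the bordified space by construction, and all but finitely many terms of $(x_n)$ lie in it; since $(x_n)$ converges to $\bar x$, the limit must also lie in $\overline{H(U,0)}$, for otherwise the open complement would be a neighborhood of $\bar x$ missing a tail of the sequence, a contradiction. As $U$ was arbitrary, we conclude $\bar x \in \bigcap_{U \in \mathscr U} \overline{H(U,0)} = S^{x_0}(\bar y)$, and therefore $\bar x \in S(\bar y)$.

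I do not expect a serious obstacle here: the substance of the lemma is the elementary observation that once $y_n \in U$, the distance bound $d(x_n, y_n) \leq d(x_n, x_0)$ upgrades into the halfspace membership $x_n \in H(U,0)$. The only point requiring any care is the passage to the limit, where one must keep in mind that the halfspace closures are taken in the bordified space $\overline X$ (so that boundary points such as $\bar x$ are eligible to lie in them) and that convergent sequences respect closed sets in the ambient topology. The latter holds in any topological space, since the closure of a set is closed, so no additional hypotheses on $\overline X$ are needed.
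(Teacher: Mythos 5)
Your proof is correct and follows essentially the same route as the paper's: show a tail (the paper uses a subsequence, an immaterial difference) of $(y_n)$ lies in each basic neighborhood $U$ of $\bar y$, deduce $x_n \in H(U,0)$ from $d(x_n,U) \leq d(x_n,y_n) \leq d(x_n,x_0)$, and pass to the limit to get $\bar x \in \overline{H(U,0)}$ for every $U$, hence $\bar x \in S^{x_0}(\bar y) \subseteq S(\bar y)$. Your explicit remarks on where the closures are taken and why limits respect closed sets are slightly more careful than the paper's terse argument, but the substance is identical.
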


\begin{proof} Let $\{N_k\}$ be any neighborhood basis about
$\bar y$. Fix $k$. Then, since $y_n \rightarrow \bar y$, there exists a subsequence
$(s_n)$ of $(y_n)$ contained entirely in $N_k$, and $s_n \rightarrow \bar y$. Let
$(t_n)$ be the corresponding subsequence of $(x_n)$ (i.e., matching indices with
$s_n$), so that for each $n$,
$d(t_n, s_n) \leq d(t_n, x_0)$ and $t_n \rightarrow \bar x$. Then 
\[(s_n) \subseteq
N_k \implies (t_n) \subseteq H(N_k,0) \implies \bar x \in \overline{H(N_k,0)}.\]
Since $k$ was arbitrary, $\bar x \in S^{x_0}(\bar y)$. Recall, 
$S^{x_0}(\bar y) \subseteq S(\bar y)$.
\end{proof}


\section{The Diestel-Leader Graph}
\label{dlgraph:sec}

\subsection{Background}





\begin{definition}[The graph $DL_d(q)$] Let $T$ be a regular $q+1$ valent tree,
such that each vertex $v$ has a single predecessor and $q$ successors.  
We think of successors as lying above predecessors. Let
each edge have length 1, and label the edges of $T$ so that for each vertex
$v$, the $q$ successors of $v$ have labels in one-to-one correspondence with
the set $\{0,1,...,q-1\}$.  Choose a basepoint $o$ in $T$. For $v,w \in T$, let
$v \curlywedge w$ denote the greatest common ancestor of $v$ and $w$ in $T$.
Define the following functions $T \rightarrow Z$: 
\[ l(v) = d(v,o\curlywedge v), \quad 
m(v) = d(o,o\curlywedge v), \text{ and } \quad 
h(v) = l(v) - m(v).\]
The function $h$ gives the height in $T$, but we will make heavy use of $m$ and
$l$ as well, as they appear in the distance formula provided by Stein and Taback in
\cite{steintaback}.

For a positive integer $d$, let
$\{T_i \mid 1 \leq i \leq d\}$ be a set of copies of $T$ with basepoints
$o_i$ and functions $m_i$, $l_i$, and $h_i$. Let 
$DL_d(q)$ be the graph whose vertices are the $d$-tuples $v = (v_1,v_2,...,v_d)$,
$v_i \in T_i$, satisfying $\sum_{i=1}^d h_i(v_i) = 0$. Two vertices $v$ and $w$
in $DL_d(q)$ are joined by an edge if there are $i\neq j$ such that: (i)
$v_i$ and $w_i$ are adjacent in $T_i$, (ii) $v_j$ and $w_j$ are adjacent in
$T_j$, and (iii) for all $k \not \in \{i,j\}$, $v_k = w_k$. That is, two
vertices in $DL_d(q)$ are adjacent if you can get from one to the other by
simultaneously moving up in one tree and down in another. The graph $DL_d(q)$
has basepoint $o = (o_1,o_2,...,o_d)$; since we are interested in
cases where $DL_d(q)$ is the Cayley graph of a group, we refer to $o$ as $id$. 
There are natural projections $p_i: DL_d(q) \rightarrow T_i$ sending $v$ to
$v_i$. From here out, we will use $p_i(v)$ in lieu of $v_i$. We will reserve
the notation $d(v,w)$ for distance between two vertices in $DL_d(q)$, and we
will use $d_i(p_i(v),p_i(w))$ to refer to the distance from the projection
$p_i(v)$ to $p_i(w)$ in $T_i$.
\end{definition}

Notice that for $v \in DL_d(q)$, since $\sum h_i(v) = 0$, we have $\sum l_i(v)
= \sum m_i(v)$; each point $v$ is determined doing the following for each tree
$T_i$: first select the value $m_i(v)$, which represents moving downward in
$T_i$ to the height $-m_i(v)$, and then select a path upwards from that point
that does not backtrack having length $l_i(v)$. This upward path corresponds 
to an ordered tuple in
$\{0,1,...,q-1\}^{l_i(v)}$. Figure \ref{typicalpoint:fig} illustrates an example
element of $DL_3(2)$.

\begin{figure}
\begin{center}

\tikzset{
    vertex/.style = {shape=circle,fill=black,draw=black,inner sep=0pt,outer
sep=0pt,minimum size=1mm},
}
\def\edgescale{.75cm}
\def\treeshift{3cm}
\begin{tikzpicture}

\draw [thin, gray, dashed] (-1cm,0) node[left] {$h=0$} -- (3*\treeshift,0);

\node[vertex] (A1) at ({.5*\treeshift+0*\edgescale},{0*\edgescale}) {};
\node[vertex] (B1) at ({.5*\treeshift-.5*\edgescale},{1*\edgescale}) {};

\draw (A1) node [fill=white, rectangle, left=.1cm] {$o_1$};
\draw (B1) node [above right] {$p_1(v)$}; 

\draw (A1) -- node [left] {\scriptsize 0} (B1);

\node[vertex] (A2) at ({1*\treeshift+0*\edgescale},{0*\edgescale}) {};
\node[vertex] (B2) at ({1*\treeshift+.5*\edgescale},{-1*\edgescale}) {};
\node[vertex] (C2) at ({1*\treeshift+1*\edgescale},{0*\edgescale}) {};
\node[vertex] (D2) at ({1*\treeshift+1*\edgescale},{-2*\edgescale}) {};
\node[vertex] (E2) at ({1*\treeshift+1.5*\edgescale},{-1*\edgescale}) {};

\draw (A2) node [fill=white, rectangle, left=.1cm] {$o_2$};
\draw (C2) node [above right] {$p_2(v)$}; 

\draw  (A2) -- node[left]{\scriptsize 0}(B2);
\draw (B2) -- node[left] {\scriptsize 0} (D2);
\draw (D2) -- node[right] {\scriptsize 1} (E2);
\draw (E2) -- node[left] {\scriptsize 0} (C2);

\node[vertex] (A3) at ({2*\treeshift+0*\edgescale},{0*\edgescale}) {};
\node[vertex] (B3) at ({2*\treeshift+.5*\edgescale},{-1*\edgescale}) {};
\node[vertex] (D3) at ({2*\treeshift+1*\edgescale},{-2*\edgescale}) {};
\node[vertex] (E3) at ({2*\treeshift+1.5*\edgescale},{-1*\edgescale}) {};

\draw (A3) node [fill=white, rectangle, left=.1cm] {$o_3$};
\draw (E3) node [above right] {$p_3(v)$};

\draw (A3) -- node[left]{\scriptsize 0} (B3);
\draw (B3) -- node[left] {\scriptsize 0} (D3);
\draw (D3) -- node[right] {\scriptsize 1} (E3);
\end{tikzpicture}

\end{center}
\caption{A point $v \in DL_3(2)$ having:
$m_1 = 0$, $l_1=1$, $m_2 = l_2 = 2$, $m_3 = 2, l_3 =1$.
} 
\label{typicalpoint:fig}
\end{figure}
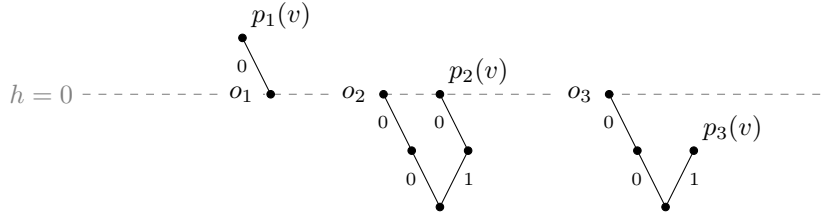

The graph $DL_d(q)$ is a special case of a more general graph, $DL(q_1,
q_2,...,q_d)$ built from $d$ trees having possibly different valences; all
of these are called Diestel-Leader graphs after the construction in
\cite{diestelleader} of an example of a vertex-symmetric graph that they
conjectured (in response to the question by Woess) is not 
quasi-isometric to the Cayley graph of any group. Eskin, Fisher, and Whyte
later proved in \cite{eskinfisherwhyte} that when $m\neq n$, this is indeed
the case for $DL(m,n)$. In this paper, we will only discuss $DL_3(q)$, which in
Corollary 3.15 of \cite{horoprod} is shown to be a Cayley graph of a certain
affine matrix group over $\integers/3\integers$ with respect to a certain
finite generating set.  

Note: Throughout this paper, we are really interested in the vertex set of
$DL_d(q)$, representing the corresponding group with the metric structure
provided by the edges of the graph. Thus we abuse notation and use $DL_d(q)$ to
denote the discrete group.




\begin{definition} \label{def:mlandh} Let $x$ and $y$ be vertices of
$\text{DL}_d(q)$.  For $1\leq i \leq d$, we extend the $m,l$ notation to define
$m_i(x,y) = d_i(p_i(x), p_i(x)\curlywedge p_i(y)$,
$l_i(x,y) = d_i(p_i(y), p_i(x)\curlywedge p_i(y)$, and
$h_i(x,y) = l_i(x,y) - m_i(x,y)$.
Notice that $m_i(y) = m_i(id,y)$ and $l_i(y) = l(id,y)$, and 
$h_i(x,y) = h_i(y)-h_i(x)$.
\end{definition}

\begin{lemma}
\label{mixy:lem}
The formulas for $m_i(x,y)$ and $l_i(x,y)$ are determined by whether
$m_i(x)$ is less than, equal to, or greater than $m_i(y)$, as follows:
\begin{align*}
    m_i(x) < m_i(y)\text{: } & m_i(x,y) = l_i(x) + (m_i(y) - m_i(x)) = m_i(y) + h_i(x) \\
                     & l_i(x,y) = l_i(y)  \\
    m_i(x) = m_i(y)\text{: } &\text{set } D_i \text{ to the length 
                                of the common upward path} \\
                     & m_i(x,y) = l_i(x) - D_i \\
                     & l_i(x,y) = l_i(y) - D_i \\
    m_i(x) > m_i(y)\text{: } & m_i(x,y) = l_i(x) \\
                     & l_i(x,y) = l_i(y) + (m_i(x) - m_i(y)) = m_i(x) + h_i(y)
\end{align*}
\end{lemma}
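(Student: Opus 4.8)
The plan is to work inside a single tree $T_i$, writing $a = p_i(x)$, $b = p_i(y)$, and $c = p_i(x) \curlywedge p_i(y)$ for the common ancestor whose distances to $a$ and $b$ are exactly $m_i(x,y)$ and $l_i(x,y)$. The key is to recall the geometric meaning of $m$ and $l$: the predecessor relation singles out a distinguished end of $T_i$, the point $o \curlywedge v$ is where the descending rays (toward that end) from $o$ and $v$ first meet, and it sits at height $-m(v)$, with $v$ lying $l(v)$ steps above it at height $h(v) = l(v) - m(v)$. Thus $o \curlywedge a$ has height $-m_i(x)$ and $o \curlywedge b$ has height $-m_i(y)$, and all three of $o \curlywedge a$, $o \curlywedge b$, and $c$ lie along the descending rays emanating from $o$, $a$, and $b$.

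First I would establish the central geometric claim locating $c = a \curlywedge b$ relative to the two confluence points $o \curlywedge a$ and $o \curlywedge b$, by comparing the heights $-m_i(x)$ and $-m_i(y)$ at which the rays from $a$ and $b$ join the descending ray from $o$. When $m_i(x) < m_i(y)$, the ray from $a$ merges with the $o$-ray strictly above the point where the ray from $b$ does, so below height $-m_i(y)$ all three rays coincide while just above it the $a$-ray (still on the $o$-ray) and the $b$-ray have already split; hence $c = o \curlywedge b$. Symmetrically, $m_i(x) > m_i(y)$ forces $c = o \curlywedge a$. When $m_i(x) = m_i(y)$, the two confluence points coincide (a descending ray meets each height at most once), so $o \curlywedge a = o \curlywedge b$ and $c$ is the top of the maximal common initial segment of the two upward paths from this shared point; its length is exactly the quantity $D_i$.

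With $c$ located in each case, the two distance computations reduce to reading segment lengths off the heights. For instance, when $m_i(x) < m_i(y)$ one decomposes the descending path from $a$ to $c = o \curlywedge b$ as the $l_i(x)$ steps from $a$ down to $o \curlywedge a$, followed by the $m_i(y) - m_i(x)$ steps continuing from height $-m_i(x)$ down to height $-m_i(y)$, giving $m_i(x,y) = l_i(x) + (m_i(y) - m_i(x)) = m_i(y) + h_i(x)$, while $b$ already sits $l_i(y)$ above $c$, giving $l_i(x,y) = l_i(y)$. The case $m_i(x) > m_i(y)$ is identical after swapping the roles of $a$ and $b$, and the equal-height case subtracts the shared segment of length $D_i$ from each of $l_i(x)$ and $l_i(y)$.

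I expect the only real obstacle to be pinning down the location of $c = a \curlywedge b$ cleanly, that is, verifying that the descending rays behave as claimed and that the confluence of the $a$- and $b$-rays is forced to be whichever of $o \curlywedge a$, $o \curlywedge b$ lies lower (or their common value in the tie case). Once that branch-point bookkeeping is correct, every formula follows by additive height accounting along a geodesic in the tree, which is routine.
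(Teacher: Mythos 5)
Your proposal is correct and takes essentially the same approach as the paper: the paper's entire proof is a pointer to the case schematics in Figure \ref{milicases:fig}, which depict exactly your three-way comparison of $m_i(x)$ and $m_i(y)$, the identification of $p_i(x)\curlywedge p_i(y)$ with whichever of $o_i\curlywedge p_i(x)$, $o_i\curlywedge p_i(y)$ lies lower (or with the endpoint of the common upward segment of length $D_i$ in the tie case), and the resulting additive edge counts. The branch-point fact you flag as the one obstacle is immediate from the unique-predecessor structure of $T_i$ (descending rays are deterministic, so once they split going upward they never re-merge), so your written argument is in fact a more rigorous rendering of the paper's picture proof.
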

\begin{proof}
The schematics for each case are illustrated in Figure \ref{milicases:fig}. 
\end{proof}

\begin{figure}
\begin{center}

\tikzset{
    vertex/.style = {shape=circle,fill=black,draw=black,inner sep=0pt,outer
sep=0pt,minimum size=1mm},
    blank/.style = {minimum size=0mm},
    edgy/.style = {decorate, decoration={snake, pre length=2mm, post length=2mm,
amplitude=.3mm}}
}
\def\edgescale{1.5cm}

\begin{tikzpicture}

\node[vertex] (A) at ({0*\edgescale},{0*\edgescale}) {};
\node[vertex] (B) at ({.5*\edgescale},{-1*\edgescale}) {};
\node[vertex] (C) at ({1*\edgescale},{0*\edgescale}) {};
\node[vertex] (D) at ({1*\edgescale},{-2*\edgescale}) {};
\node[vertex] (E) at ({1.5*\edgescale},{-1*\edgescale}) {};

\draw (A) node [left] {$o_j$};
\draw (C) node [left] {$x$}; 
\draw (E) node [left] {$y$};

\draw [dashed] (A) -- (B);
\draw [dashed] (B) -- (C);
\draw [edgy] (B) -- (D);
\draw [edgy] (D) -- (E);

\end{tikzpicture}
\hspace{.1in}\vline\hspace{.1in}
\begin{tikzpicture}

\node[vertex] (A) at ({0*\edgescale},{0*\edgescale}) {};
\node[vertex] (B) at ({.5*\edgescale},{-1*\edgescale}) {};
\node[vertex] (C) at ({1*\edgescale},{0*\edgescale}) {};
\node[vertex] (D) at ({1*\edgescale},{-2*\edgescale}) {};
\node[vertex] (E) at ({1.5*\edgescale},{-1*\edgescale}) {};
\node[vertex] (F) at ({2*\edgescale}, 0) {};

\draw (A) node [left] {$o_j$};
\draw (C) node [left] {$x$}; 
\draw (F) node [left] {$y$};

\draw [dashed] (A) -- (B);
\draw [dashed] (B) -- (D);
\draw [dashed] (D) -- (E);
\draw [dashed] (E) -- (C);
\draw [edgy] (E) -- (F);
\end{tikzpicture}
\hspace{.1in}\vline\hspace{.1in}
\begin{tikzpicture}

\node[vertex] (A) at ({0*\edgescale},{0*\edgescale}) {};
\node[vertex] (B) at ({.5*\edgescale},{-1*\edgescale}) {};
\node[vertex] (C) at ({1*\edgescale},{0*\edgescale}) {};
\node[vertex] (D) at ({1*\edgescale},{-2*\edgescale}) {};
\node[vertex] (E) at ({1.5*\edgescale},{-1*\edgescale}) {};
\node[vertex] (F) at ({2*\edgescale}, 0) {};

\draw (A) node [left] {$o_j$};
\draw (C) node [left] {$y$}; 
\draw (F) node [left] {$x$};

\draw [dashed] (A) -- (B);
\draw [edgy] (B) -- (C);
\draw [dashed] (B) -- (D);
\draw [dashed] (D) -- (E);
\draw [dashed] (E) -- (F);
\end{tikzpicture}

\end{center}
\caption{Schematics for the cases for Lemma \ref{mixy:lem}.} 
\label{milicases:fig}
\end{figure}
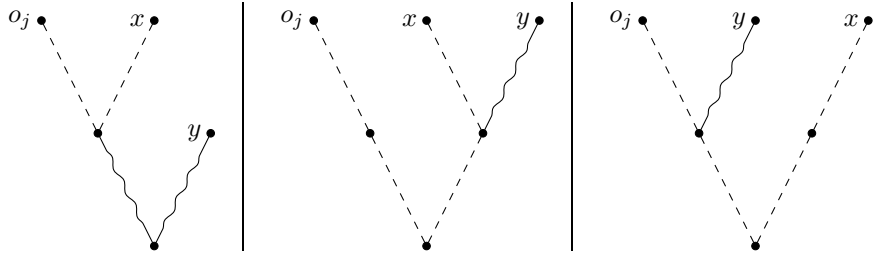


\begin{definition}
\label{mlformulas:def}
Let $x$ and $y$ be vertices in $\text{DL}_d(q)$ and let $\sigma\in\Sigma_d$, 
the symmetric group on $d$ letters.
For $2\leq i \leq d-1$, define
\[ f_{\sigma,i}(x,y) = 
m_{\sigma(1)}(x,y) + \dots +m_{\sigma(i)}(x,y) + l_{\sigma(i)}(x,y)+ \dots +
l_{\sigma(d)}(x,y), \]
and 
\[ f_{\sigma,d}(x,y) = 2m_{\sigma(1)}(x,y) + m_{\sigma(2)}(x,y) + \dots +
m_{\sigma(d)}(x,y) + l_{\sigma(d)}(x,y).\]
\end{definition}

Stein and Taback derive the following distance formula in \cite{steintaback}
(see Lemma 1 and following discussion, as well as the proof of Corollary 10)
in the case that $DL_d(q)$ is the Cayley graph of a group. 

\begin{theorem}[Stein-Taback]
\label{STdistthm}
Let $x$ and $y$ be vertices in $\text{DL}_d(q)$. For $\sigma \in \Sigma_d$, let
$f_\sigma(x,y) = \max_{2\leq i \leq d}\{ f_{\sigma,i}(x,y) \}$. Then
$d(x,y) = \min_{\sigma\in\Sigma_d}\left\{ f_{\sigma}(x,y) \right\}$.
\end{theorem}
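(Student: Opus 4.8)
The plan is to prove the two inequalities $d(x,y)\le \min_{\sigma}f_\sigma(x,y)$ and $d(x,y)\ge \min_{\sigma}f_\sigma(x,y)$ separately, after first recasting distance in $DL_d(q)$ as a move-scheduling problem. The key observation is that, by the definition of the edges, every edge of $DL_d(q)$ simultaneously moves \emph{up} one step in some tree $T_a$ and \emph{down} one step in some other tree $T_b$. Consequently a path of length $N$ performs exactly $N$ up-moves and $N$ down-moves in total across all trees. To travel from $x$ to $y$, the projection to $T_i$ must pass through the meet $p_i(x)\curlywedge p_i(y)$; by Lemma \ref{mixy:lem} this forces at least $m_i(x,y)$ down-moves (the descent to the meet) and at least $l_i(x,y)$ up-moves (the ascent to $p_i(y)$) in $T_i$, with the descents preceding the ascents. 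Thus $d(x,y)=\sum_i m_i(x,y)$ plus the number of unavoidable ``wasted'' excursions, and the role of $\sigma\in\Sigma_d$ is to record the order in which the trees finish descending and begin their final ascent.

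For the upper bound I would, for each fixed $\sigma$, exhibit a path of length $f_\sigma(x,y)$ that processes the trees in the order $\sigma(1),\dots,\sigma(d)$. The construction drives tree $T_{\sigma(1)}$ down to its meet first, then advances through the order so that at every step there is always one tree ready to ascend toward its target paired with one tree still descending toward its meet; the only forced inefficiencies are the initial $m_{\sigma(1)}(x,y)$ ``priming'' descents (which must be paired with ascents that are later undone) and the final $l_{\sigma(d)}(x,y)$ ascents. A direct count of this path's length, using the case formulas of Lemma \ref{mixy:lem}, returns exactly $\max_{2\le i\le d}f_{\sigma,i}(x,y)=f_\sigma(x,y)$, whence $d(x,y)\le\min_{\sigma}f_\sigma(x,y)$.

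For the lower bound, take an arbitrary geodesic from $x$ to $y$ and define a permutation $\sigma$ by ordering the trees according to their \emph{turn-around times}: let $\tau_i$ be the last time the projection to $T_i$ sits at the meet $p_i(x)\curlywedge p_i(y)$, and choose $\sigma$ so that $\tau_{\sigma(1)}\le\cdots\le\tau_{\sigma(d)}$. For each $2\le i\le d-1$, cutting the path at time $\tau_{\sigma(i)}$ gives the estimate directly: every step up to $\tau_{\sigma(i)}$ carries a down-move, and the trees $\sigma(1),\dots,\sigma(i)$ have completed their descents by then, contributing at least $\sum_{j\le i}m_{\sigma(j)}(x,y)$ such moves; symmetrically every step after $\tau_{\sigma(i)}$ carries an up-move, and the trees $\sigma(i),\dots,\sigma(d)$ still owe at least $\sum_{j\ge i}l_{\sigma(j)}(x,y)$ ascents. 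Adding the two counts yields $N\ge f_{\sigma,i}(x,y)$. Because this is the \emph{same} $\sigma$ for every $i$, establishing $N\ge f_{\sigma,d}(x,y)$ as well would give $N\ge\max_i f_{\sigma,i}(x,y)\ge\min_{\sigma}f_\sigma(x,y)$, completing the proof.

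The main obstacle is precisely the ``doubled'' term in $f_{\sigma,d}(x,y)=2m_{\sigma(1)}(x,y)+\sum_{j\ge2}m_{\sigma(j)}(x,y)+l_{\sigma(d)}(x,y)$, i.e.\ accounting for the extra descent $m_{\sigma(1)}(x,y)$ beyond the naive total $\sum_j m_{\sigma(j)}(x,y)$. Here I would use conservation of total height: since heights sum to zero at every vertex, the descent of $T_{\sigma(1)}$ to its meet during the initial segment forces the remaining trees to rise by a net $m_{\sigma(1)}(x,y)$, and, as each of those trees must still descend to its own meet, this rise must later be undone by at least $m_{\sigma(1)}(x,y)$ additional down-moves. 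Making this rigorous for arbitrary paths---which need not project to geodesics in each tree and may oscillate---and checking that this start-of-path waste does not overlap with the final-ascent contribution $l_{\sigma(d)}(x,y)$ is the delicate part; it is also where the asymmetry between the first and last tree in the order $\sigma$ (hence the factor $2$ on $m_{\sigma(1)}$ but not on the others) genuinely enters.
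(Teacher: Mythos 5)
You should first know that the paper contains no proof of Theorem \ref{STdistthm} to compare against: the formula is imported wholesale from Stein and Taback \cite{steintaback} (their Lemma 1 and the proof of their Corollary 10), and the present paper only consumes it. Judged on its own merits, your outline follows what is essentially the strategy of the cited source: view each edge of $DL_d(q)$ as exactly one up-move in one tree paired with exactly one down-move in another, so that a path of length $N$ has $N$ up-moves and $N$ down-moves in total, and then cut an arbitrary path at the turn-around times $\tau_{\sigma(1)}\leq\cdots\leq\tau_{\sigma(d)}$. Your cut argument for $2\leq i\leq d-1$ is complete and correct: trees $\sigma(1),\dots,\sigma(i)$ must each perform at least $m_{\sigma(j)}(x,y)$ down-moves by time $\tau_{\sigma(i)}$ and each step carries only one down-move, while trees $\sigma(i),\dots,\sigma(d)$ still owe at least $l_{\sigma(j)}(x,y)$ up-moves afterwards, giving $N\geq f_{\sigma,i}(x,y)$ for a single $\sigma$ uniformly in $i$.

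There are, however, two genuine gaps as written. First, the upper bound: you assert that a single construction ``returns exactly $\max_{2\leq i\leq d}f_{\sigma,i}(x,y)$,'' but the path you describe (prime with $m_{\sigma(1)}(x,y)$ wasted ascents, end with $l_{\sigma(d)}(x,y)$ unpaired ascents) accounts only for the regime where $f_{\sigma,d}$ is the maximum; when an intermediate $f_{\sigma,i}$ dominates---say when $l_{\sigma(i)}(x,y)+\cdots+l_{\sigma(d)}(x,y)$ exceeds the descents available to pair with, so no tree is ``still descending''---the waste pattern is different and the scheduling must be re-examined case by case (note you only need a path of length at most $f_\sigma(x,y)$ for each $\sigma$, not exactly, which softens but does not remove this step). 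Second, the bound $N\geq f_{\sigma,d}(x,y)=2m_{\sigma(1)}(x,y)+m_{\sigma(2)}(x,y)+\cdots+m_{\sigma(d)}(x,y)+l_{\sigma(d)}(x,y)$, which you yourself flag as ``the delicate part,'' is the heart of the theorem and is left open. For what it is worth, your height-conservation idea does close, and the ordering by turn-around times is precisely what makes it close: since $\tau_{\sigma(1)}$ is minimal, at the \emph{first} moment $T_{\sigma(1)}$ reaches its meet, every other tree $T_j$ still owes a visit to its own meet; the collective net rise $m_{\sigma(1)}(x,y)$ forced on those trees at that moment must therefore be undone by that many down-moves beyond the $\sum_j m_j(x,y)$ already counted (summing the per-tree bounds handles trees whose individual net rise is negative). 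Moreover all of these down-moves occur at steps up to $\tau_{\sigma(d)}$, while the $l_{\sigma(d)}(x,y)$ up-moves of $T_{\sigma(d)}$ occur strictly after $\tau_{\sigma(d)}$, so the counts are disjoint and sum to $f_{\sigma,d}(x,y)$. Until that accounting, the oscillating (non-geodesic) tree-projections, and the degenerate cases $m_j(x,y)=0$ are written out, your submission is a correct and well-chosen plan rather than a proof.
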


\subsection{Distance bounds in Diestel-Leader graphs} 
One can find a variety of
lower bounds on the distance between two points in $DL_d(q)$. 

\begin{observation}
\label{bigindex:obs}
Let $R > 0$. Let $x, y \in DL_d(q)$, and that suppose for each
$\sigma \in \Sigma_d$, there is $i \in \{2, ..., d\}$ such that
$f_{\sigma,i}(x,y) \geq R$. Then $d(x,y) \geq R$.
\end{observation}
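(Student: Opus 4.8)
The plan is to unwind the Stein--Taback distance formula (Theorem~\ref{STdistthm}) together with the definitions of $f_\sigma$ and $f_{\sigma,i}$, so that the conclusion reduces to a routine manipulation of the quantifiers hidden inside the nested $\min$ and $\max$. The key point is that the hypothesis is precisely tailored to force each inner maximum $f_\sigma(x,y)$ to be at least $R$, which in turn forces the outer minimum --- namely $d(x,y)$ --- to be at least $R$.

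First I would fix an arbitrary permutation $\sigma \in \Sigma_d$. By hypothesis there exists an index $i \in \{2,\dots,d\}$ with $f_{\sigma,i}(x,y) \geq R$. Since $f_\sigma(x,y) = \max_{2 \leq j \leq d} f_{\sigma,j}(x,y)$ is taken over all such indices, it dominates the particular term $f_{\sigma,i}(x,y)$, so $f_\sigma(x,y) \geq f_{\sigma,i}(x,y) \geq R$. Note that the witnessing index $i$ is permitted to depend on $\sigma$; this causes no difficulty, because for each \emph{fixed} $\sigma$ we need only a single index realizing the bound in order to conclude that the maximum over indices is at least $R$.

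Since $\sigma$ was arbitrary, the inequality $f_\sigma(x,y) \geq R$ holds for every $\sigma \in \Sigma_d$. Hence the minimum over all permutations also satisfies $\min_{\sigma \in \Sigma_d} f_\sigma(x,y) \geq R$, and by Theorem~\ref{STdistthm} this minimum equals $d(x,y)$. Therefore $d(x,y) \geq R$, as claimed.

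Because the argument is purely a matter of chasing the $\min$/$\max$ structure of the distance formula, I do not anticipate a genuine obstacle. The only point requiring any care is the order of quantifiers: one must ensure that the $\sigma$-dependence of the witnessing index $i$ does not interfere with the bound, which it does not, precisely because the inner quantity $f_\sigma$ is itself a maximum taken separately for each $\sigma$.
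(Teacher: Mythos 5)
Your proof is correct and is exactly the intended argument: the paper leaves this as an unproved observation precisely because it follows immediately from the $\min$/$\max$ structure of the Stein--Taback formula (Theorem~\ref{STdistthm}). Your careful handling of the $\sigma$-dependent witnessing index $i$ is the only point of substance, and you resolve it correctly.
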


\begin{observation} 
\label{treedist:obs}
Let $x, y \in DL_d(q)$, and let $R = \max\{d_i(p_i(x),p_i(y)) \mid 1 \leq i \leq
d\}$. Then $d(x,y) \geq R$. 
\end{observation}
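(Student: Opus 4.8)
The plan is to exploit the fact that each coordinate projection $p_i \colon DL_d(q) \to T_i$ is distance non-increasing. Granting this, the conclusion is immediate: the tree distance $d_i(p_i(x),p_i(y))$ can never exceed $d(x,y)$, so the maximum $R$ of these tree distances is a lower bound for $d(x,y)$. The whole argument thus reduces to checking the Lipschitz property of the projections, which is a direct reading of the edge relation.

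First I would verify the projection bound at the level of a single edge. By definition, an edge of $DL_d(q)$ joins vertices $v$ and $w$ for which there exist $a \neq b$ with $p_a(v),p_a(w)$ adjacent in $T_a$, with $p_b(v),p_b(w)$ adjacent in $T_b$, and with $p_k(v)=p_k(w)$ for all $k \notin \{a,b\}$. Hence for every tree index $i$ the projected images $p_i(v),p_i(w)$ are either equal or adjacent, so $d_i(p_i(v),p_i(w)) \leq 1$.

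Next I would take a geodesic $x=z_0,z_1,\dots,z_n=y$ in $DL_d(q)$, so that $n=d(x,y)$, and project it into a fixed tree $T_i$. Applying the triangle inequality in $T_i$ together with the per-edge bound gives $d_i(p_i(x),p_i(y)) \leq \sum_{k=1}^{n} d_i(p_i(z_{k-1}),p_i(z_k)) \leq n = d(x,y)$. Since this holds for each $i$, taking the maximum over $i$ yields $R \leq d(x,y)$, which is the claim.

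I do not expect a genuine obstacle here: the only content is the per-edge projection bound, and after that the argument is routine Lipschitz/triangle-inequality bookkeeping. For an alternative route that stays inside the distance-formula framework, I could instead invoke Observation \ref{bigindex:obs}. Writing $d_i(p_i(x),p_i(y)) = m_i(x,y)+l_i(x,y)$ and letting $j$ attain the maximum $R$, one checks that every $f_{\sigma,i}(x,y)$ dominates $m_{\sigma(i)}(x,y)+l_{\sigma(i)}(x,y)$, which settles every $\sigma$ with $\sigma^{-1}(j)\geq 2$ by choosing $i=\sigma^{-1}(j)$. The lone remaining case $\sigma(1)=j$ is then handled by $f_{\sigma,d}(x,y)$, which equals $m_j(x,y)+\sum_k m_k(x,y)+l_{\sigma(d)}(x,y)$ and is therefore at least $m_j(x,y)+l_j(x,y)=R$ once one uses the height balance $\sum_k m_k(x,y)=\sum_k l_k(x,y)\geq l_j(x,y)$.
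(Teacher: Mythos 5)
Your proof is correct. The paper states this as an \emph{Observation} and supplies no proof at all, so there is nothing to match line-by-line; your main argument (each projection $p_i$ is $1$-Lipschitz because an edge of $DL_d(q)$ moves each tree coordinate by at most one step, then project a geodesic and apply the triangle inequality in $T_i$) is precisely the evident justification the authors leave implicit, and it is airtight. It is worth noting that your primary route is actually preferable to your fallback route in one respect: it makes no use of the Stein--Taback formula (Theorem \ref{STdistthm}), which the paper invokes only under the hypothesis that $DL_d(q)$ is a Cayley graph, so the Lipschitz argument is both more elementary and more general. Your alternative route through Observation \ref{bigindex:obs} also checks out: $d_i(p_i(x),p_i(y)) = m_i(x,y)+l_i(x,y)$ since the tree geodesic passes through the common ancestor; for any $\sigma$ with $\sigma^{-1}(j)\geq 2$ the term $f_{\sigma,\sigma^{-1}(j)}(x,y)$ contains both $m_j(x,y)$ and $l_j(x,y)$ and all summands are nonnegative; and in the remaining case $\sigma(1)=j$ your use of the balance identity is valid, since $\sum_k h_k(x,y) = \sum_k h_k(y) - \sum_k h_k(x) = 0$ forces $\sum_k m_k(x,y) = \sum_k l_k(x,y) \geq l_j(x,y)$, giving $f_{\sigma,d}(x,y) \geq m_j(x,y)+l_j(x,y) = R$. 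Either route suffices; the first is the one to keep.
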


\begin{lemma}
\label{f-ineq:lem}
Let $x,y,z \in DL_d(q)$ and $k \geq 0$. Suppose that for all $\sigma
\in \Sigma_d$ and all $i \in \{2, ..., d\}$, $f_{\sigma,i}(x,z) \geq f_{\sigma, i}(x,y) +k$. Then
$d(x,z) \geq d(x,y) + k$. The same is true when the inequalities are not strict.
\end{lemma}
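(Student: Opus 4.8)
The plan is to push the hypothesized inequalities through the Stein-Taback min-max distance formula (Theorem~\ref{STdistthm}) using nothing more than the monotonicity of $\max$ and $\min$. Recall from that theorem that $d(x,z) = \min_{\sigma \in \Sigma_d} f_\sigma(x,z)$, where $f_\sigma(x,z) = \max_{2 \leq i \leq d} f_{\sigma,i}(x,z)$, and similarly for $d(x,y)$. So the task is purely to compare two nested extrema whose innermost terms are controlled, term-by-term, by the hypothesis.

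First I would fix a single permutation $\sigma \in \Sigma_d$ and work at the level of one $f_\sigma$. By hypothesis, $f_{\sigma,i}(x,z) \geq f_{\sigma,i}(x,y) + k$ for every $i \in \{2, \dots, d\}$. Since this holds for each index in a finite set, taking the maximum over $i$ preserves it, giving $\max_i f_{\sigma,i}(x,z) \geq \max_i\bigl(f_{\sigma,i}(x,y) + k\bigr) = k + \max_i f_{\sigma,i}(x,y)$; in the theorem's notation this reads $f_\sigma(x,z) \geq f_\sigma(x,y) + k$. Because $\sigma$ was arbitrary, this inequality holds for every $\sigma \in \Sigma_d$. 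Next I would take the minimum over $\sigma$: minimizing a uniformly larger function over the same finite index set yields a larger minimum, so $\min_\sigma f_\sigma(x,z) \geq \min_\sigma\bigl(f_\sigma(x,y) + k\bigr) = k + \min_\sigma f_\sigma(x,y)$. By Theorem~\ref{STdistthm} the left side is $d(x,z)$ and the right side is $d(x,y) + k$, which is the desired conclusion.

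I do not expect any genuine obstacle here; the entire content of the lemma is already packaged in the distance formula, and the only things to watch are the order of the two extremum operations and the consistent direction of the inequalities. I would take care to observe that both the $\max$ (over the $d-1$ values of $i$) and the $\min$ (over the $d!$ permutations) range over finite sets, so each extremum is attained and the two monotonicity steps are elementary. The final sentence of the lemma then follows for free, since both steps preserve strict inequalities as well: if $a_i > b_i$ for all $i$ then $\max_i a_i > \max_i b_i$, and if $g(\sigma) > h(\sigma)$ for all $\sigma$ then $\min_\sigma g > \min_\sigma h$, because in each case the winning index or permutation still satisfies the strict bound against the opposing extremum. Hence the identical chain of reasoning yields the strict conclusion from strict hypotheses and the non-strict conclusion from non-strict hypotheses.
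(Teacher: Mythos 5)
Your proposal is correct and matches the paper's proof essentially verbatim: the paper also fixes $\sigma$, passes the termwise bound through the maximum over $i$ to get $f_\sigma(x,z) \geq f_\sigma(x,y) + k$, and then takes the minimum over $\sigma$ via Theorem~\ref{STdistthm} to conclude $d(x,z) \geq d(x,y) + k$. Your added remarks about finiteness of the index sets and preservation of strict versus non-strict inequalities are sound and simply make explicit what the paper leaves implicit.
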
 

\begin{proof}
For each $\sigma \in \Sigma_d$, 
\[ f_\sigma(x,z) = \max_{2\leq i\leq d}\{ f_{\sigma,i}(x,z) \} \geq \max_{2\leq
i\leq d}\{ f_{\sigma,i}(x,y) \} + k = f_\sigma(x,y) + k. \]
So,
\[ d(x,z) = \min_{\sigma \in \Sigma_d}\{f_\sigma(x,z)\} \geq \min_{\sigma \in
\Sigma_d}\{f_\sigma(x,y)\} + k = d(x,y) + k. \]
\end{proof}

\begin{lemma}
\label{m-ineq:lem}
Let $x, y, z \in DL_d(q)$ and suppose there are nonnegative numbers $c_j$,
$1 \leq j \leq d$, such that $m_j(x,z) \geq m_j(x,y) + c_j$ and
$l_j(x,z) \geq l_j(x,y) + c_j$. Then $d(x,z) \geq d(x,y) + \sum_{j=1}^d c_j$.

\end{lemma}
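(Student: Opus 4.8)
The plan is to reduce everything to Lemma~\ref{f-ineq:lem} by setting $k = \sum_{j=1}^d c_j$ and verifying the hypothesis of that lemma, namely that $f_{\sigma,i}(x,z) \geq f_{\sigma,i}(x,y) + k$ holds for \emph{every} $\sigma \in \Sigma_d$ and every $i \in \{2,\dots,d\}$. Once these termwise inequalities are established, Lemma~\ref{f-ineq:lem} immediately yields $d(x,z) \geq d(x,y) + \sum_{j=1}^d c_j$, which is exactly the desired conclusion. So the entire argument is a direct bookkeeping computation with the functions $f_{\sigma,i}$ from Definition~\ref{mlformulas:def}, using the two families of hypotheses $m_j(x,z) \geq m_j(x,y) + c_j$ and $l_j(x,z) \geq l_j(x,y) + c_j$ one coordinate at a time.

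First I would fix $\sigma$ and $i$ and compute the difference $f_{\sigma,i}(x,z) - f_{\sigma,i}(x,y)$ by expanding according to Definition~\ref{mlformulas:def} and grouping the $m$-terms and $l$-terms by the tree index $\sigma(a)$. In the case $2 \leq i \leq d-1$, the difference breaks into $\sum_{a=1}^{i}\bigl(m_{\sigma(a)}(x,z)-m_{\sigma(a)}(x,y)\bigr) + \sum_{a=i}^{d}\bigl(l_{\sigma(a)}(x,z)-l_{\sigma(a)}(x,y)\bigr)$, and applying the hypotheses coordinatewise bounds this below by $\sum_{a=1}^{i} c_{\sigma(a)} + \sum_{a=i}^{d} c_{\sigma(a)}$. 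Since $\sigma$ is a permutation and the index $\sigma(i)$ is the only one appearing in both sums, this equals $\sum_{j=1}^{d} c_j + c_{\sigma(i)} \geq \sum_{j=1}^d c_j = k$, where the last inequality uses $c_{\sigma(i)} \geq 0$. In the case $i=d$, the same expansion gives a lower bound of $2c_{\sigma(1)} + \sum_{a=2}^{d} c_{\sigma(a)} + c_{\sigma(d)} = \sum_{j=1}^d c_j + c_{\sigma(1)} + c_{\sigma(d)} \geq k$, again by nonnegativity of the $c_j$.

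The one point that requires care, and which I regard as the only real obstacle, is the handling of the repeated indices built into the definitions of $f_{\sigma,i}$: the middle terms double-count $\sigma(i)$, while $f_{\sigma,d}$ carries a coefficient of $2$ on $m_{\sigma(1)}$ and also repeats $\sigma(d)$. One must check that after these repetitions every $c_j$ is still counted at least once so that the clean bound $\sum_{j=1}^d c_j$ survives, and that the \emph{surplus} contributions ($c_{\sigma(i)}$, or $c_{\sigma(1)}$ and $c_{\sigma(d)}$) can simply be discarded precisely because the $c_j$ are assumed nonnegative. Since the resulting inequality $f_{\sigma,i}(x,z) \geq f_{\sigma,i}(x,y) + k$ holds uniformly over all $\sigma$ and all $i$, Lemma~\ref{f-ineq:lem} applies and completes the proof.
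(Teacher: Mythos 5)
Your proof is correct and follows essentially the same route as the paper: both verify the termwise inequality $f_{\sigma,i}(x,z) \geq f_{\sigma,i}(x,y) + \sum_{j=1}^d c_j$ for every $\sigma$ and $i$ and then invoke Lemma~\ref{f-ineq:lem}. The paper phrases the bookkeeping slightly more abstractly (each tree $T_j$ contributes exactly one of $m_j$, $2m_j$, $l_j$, or $m_j + l_j$, so the gain per tree is at least $c_j$), while you track the repeated indices $\sigma(i)$, respectively $\sigma(1)$ and $\sigma(d)$, explicitly and discard the surplus using $c_j \geq 0$ --- the same argument in different notation.
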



\begin{proof} Investigating Definition \ref{mlformulas:def}, we see that 
for each $\sigma \in \Sigma_d$ and $2 \leq i \leq d$, 
$f_{\sigma,i}(x,z)$ is a sum that can be decomposed into $d$ terms, one for
each tree, such that for $1 \leq j \leq d$, tree $T_j$ contributes exactly one of:
$m_j(x,z)$, $2m_j(x,z)$, $l_j(x,z)$, or $m_j(x,z) + l_j(x,z)$.
Note that the term contributed depends only on $\sigma$ and $i$ and does not
depend on $x$ or $z$.

The assumption that $m_j(x,z) \geq m_j(x,y) + c_j$ and $l_j(x,z) \geq l_j(x,y)
+ c_j$ for each $j$ ensures that  
$f_{\sigma,i}(x,z) \geq f_{\sigma,i}(x,y) + \sum_{j=1}^d c_j$. 
By Lemma \ref{f-ineq:lem}, $d(x,z) \geq d(x,y) + \sum_{j=1}^d c_j$.
\end{proof}

The next results deal with points $z$ that have $h_i(z) = 0$ for each $i$.
Such points are special because $h_i(z) = 0$ implies that $m_i(z) = l_i(z)$,
which can be useful in understanding distance. 
Also, since $id$ is such a point, it is easier to compare distances to these points
with distances to $id$.

\begin{lemma}[Cases where $d(x,z) \leq d(x,id)$]
\label{nonpos-horo:lem}
Let $x, z \in DL_d(q)$ and suppose $h_i(z) = 0$ for $1 \leq i \leq d$.
\begin{enumerate}[(i)]
\item \label{strict:part} If $m_i(z) < m_i(x)$ for each
$i$ such that $m_i(x) \neq 0$, and $m_i(z) = 0$ for
each $i$ such that $m_i(x) = 0$, then $d(x,z) = d(x,id)$.
\item \label{nonstrict:part} If $m_i(z) \leq m_i(x)$ for each $i$, then $d(x,z) \leq d(x,id)$.
\end{enumerate}
\end{lemma}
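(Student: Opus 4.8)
The plan is to reduce both parts to a direct comparison of the tree-by-tree quantities $m_i(x,\cdot)$ and $l_i(x,\cdot)$ that feed the Stein--Taback formula (Theorem \ref{STdistthm}), evaluated at the two second arguments $z$ and $id$. The starting computation is the pair of values for $y = id$: since $m_i(id) = l_i(id) = 0$, Lemma \ref{mixy:lem} gives uniformly
\[ m_i(x, id) = l_i(x), \qquad l_i(x, id) = m_i(x). \]
Indeed, when $m_i(x) > 0$ this is the $m_i(x) > m_i(id)$ case, and when $m_i(x) = 0$ it is the equal case with common upward path $D_i = 0$, because $p_i(id) = o_i$ admits no upward path. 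Everything then hinges on comparing $m_i(x,z)$ and $l_i(x,z)$ against these baseline values.

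First I would dispatch part (\ref{strict:part}). Fix $i$. If $m_i(x) \neq 0$, the hypothesis $m_i(z) < m_i(x)$ places us in the $m_i(x) > m_i(z)$ case of Lemma \ref{mixy:lem}, and since $h_i(z) = 0$ this yields $m_i(x,z) = l_i(x)$ and $l_i(x,z) = m_i(x) + h_i(z) = m_i(x)$. If instead $m_i(x) = 0$, then $m_i(z) = 0$ by hypothesis and $l_i(z) = m_i(z) = 0$ (as $h_i(z) = 0$), so $p_i(z) = o_i$; the equal case of Lemma \ref{mixy:lem} again has $D_i = 0$, giving $m_i(x,z) = l_i(x)$ and $l_i(x,z) = 0 = m_i(x)$. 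In either case $m_i(x,z) = m_i(x,id)$ and $l_i(x,z) = l_i(x,id)$ for every $i$, whence $f_{\sigma,i}(x,z) = f_{\sigma,i}(x,id)$ for all $\sigma$ and $i$ (by Definition \ref{mlformulas:def}); taking the maximum over $i$ and then the minimum over $\sigma$ gives $d(x,z) = d(x,id)$.

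For part (\ref{nonstrict:part}) the same case analysis delivers inequalities rather than equalities. For each $i$ with $m_i(z) < m_i(x)$ the computation above gives equality of both quantities, and for each $i$ with $m_i(z) = m_i(x)$ the equal case of Lemma \ref{mixy:lem} gives $m_i(x,z) = l_i(x) - D_i \leq l_i(x) = m_i(x,id)$ together with $l_i(x,z) = l_i(z) - D_i = m_i(x) - D_i \leq m_i(x) = l_i(x,id)$, since $D_i \geq 0$ and $l_i(z) = m_i(z) = m_i(x)$. Thus $m_i(x,z) \leq m_i(x,id)$ and $l_i(x,z) \leq l_i(x,id)$ for all $i$. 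Applying Lemma \ref{m-ineq:lem} with the triple $(x,z,id)$ playing the roles of $(x,y,z)$ and taking all constants $c_j = 0$ then yields $d(x,id) \geq d(x,z)$, that is, $d(x,z) \leq d(x,id)$.

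The calculations are elementary; the only points demanding care are the boundary case $m_i(x) = 0$ in part (\ref{strict:part}) --- where $m_i(z) < m_i(x)$ is impossible, so one must instead read off the equal case of Lemma \ref{mixy:lem} --- and the correct orientation when invoking Lemma \ref{m-ineq:lem} in part (\ref{nonstrict:part}), where $id$ must occupy the far slot so that the established inequalities point in the direction the lemma requires. Beyond this bookkeeping, I do not expect any genuine obstacle.
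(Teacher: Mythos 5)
Your proof is correct and takes essentially the same route as the paper's: a case analysis via Lemma \ref{mixy:lem} showing the tree-wise $m_i, l_i$ data for $z$ agree with the baseline values for $id$ in part (\ref{strict:part}) (hence all $f_{\sigma,i}$ agree), and tree-wise inequalities fed into Lemma \ref{m-ineq:lem} with all $c_j = 0$ in part (\ref{nonstrict:part}). The only cosmetic difference is that you work with $m_i(x,\cdot)$ and $l_i(x,\cdot)$ where the paper writes $m_i(\cdot,x)$ and $l_i(\cdot,x)$, which is immaterial by the symmetry of the distance.
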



\begin{proof} We apply Lemma \ref{mixy:lem}, where we establish the parameter
$D_i$ for path overlap in cases where $m_i(z) = m_i(x)$.

For part \ref{strict:part}, if $m_i(x) \neq 0$, $m_i(z) < m_i(x)$ implies
$m_i(z,x) = m_i(x) + h_i(z) = m_i(x)$ and $l_i(z,x) = l_i(x).$ 
If $m_i(x) = 0$, the path overlap $D_i =0$ since $l_i(z) = 0$, so that
$m_i(z,x) = l_i(z) = m_i(x)$ and $l_i(z,x) = l_i(x)$.
Thus for each $\sigma$ and $j$, $f_{\sigma,j}(z,x) = f_{\sigma,j}(id,x)$,
so that $d(x,z) = d(x,id)$.

For part \ref{nonstrict:part}, for $i$ such that $m_i(x) = m_i(z) > 0$,
we replace the equations from the previous paragraph with inequalities
$m_i(z,x) = l_i(z) - D_i = - D_i \leq m_i(x)$,
and 
$l_i(z,x) = l_i(x) - D_i \leq l_i(x).$ 
By Lemma \ref{m-ineq:lem}, $d(x,z) \leq d(x,id)$.
\end{proof}

\begin{lemma}[Cases where $d(x,z) \geq d(x,id) + k$]
\label{pos-horo:lem}
Let $x,z \in DL_d(q)$ and suppose $h_i(z) = 0$ for each $i$. Assume $m_i(z) \neq
m_i(x)$ except in cases where $m_i(x) = 0$. Let $c_i = \max\{0, m_i(z) - m_i(x)\}$. 
Then $d(x,z) \geq d(x,id) + \sum_{i=1}^d c_i$.
\end{lemma}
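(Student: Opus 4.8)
The plan is to reduce the global distance inequality to per-tree inequalities on the quantities $m_i$ and $l_i$, and then feed these into Lemma~\ref{m-ineq:lem} with $y = id$. Concretely, it suffices to show that for each $i$ we have $m_i(x,z) \geq m_i(x,id) + c_i$ and $l_i(x,z) \geq l_i(x,id) + c_i$, since the $c_i$ are nonnegative by construction and Lemma~\ref{m-ineq:lem} then yields $d(x,z) \geq d(x,id) + \sum_{i} c_i$.

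First I would record the two identities $m_i(x,id) = l_i(x)$ and $l_i(x,id) = m_i(x)$, which follow immediately from Definition~\ref{def:mlandh} together with $p_i(id) = o_i$ (equivalently, from $h_i(x,id) = -h_i(x)$). These let me compare $m_i(x,z)$ and $l_i(x,z)$ against $l_i(x)$ and $m_i(x)$ rather than against reversed-order quantities. Next I would apply Lemma~\ref{mixy:lem} to the pair $(x,z)$, using the standing hypothesis $h_i(z) = 0$, i.e.\ $m_i(z) = l_i(z)$, to simplify each branch.

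The computation then splits according to the comparison of $m_i(x)$ and $m_i(z)$. When $m_i(x) < m_i(z)$ (so $c_i = m_i(z) - m_i(x)$), Lemma~\ref{mixy:lem} gives $m_i(x,z) = m_i(z) + h_i(x) = l_i(x) + c_i$ and $l_i(x,z) = l_i(z) = m_i(z) = m_i(x) + c_i$, which are exactly $m_i(x,id) + c_i$ and $l_i(x,id) + c_i$. When $m_i(x) > m_i(z)$ (so $c_i = 0$), the third branch gives $m_i(x,z) = l_i(x) = m_i(x,id)$ and $l_i(x,z) = m_i(x) + h_i(z) = m_i(x) = l_i(x,id)$. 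In each of these cases the desired inequality in fact holds with equality.

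The one case requiring care is $m_i(x) = m_i(z)$. Here the hypothesis that $m_i(z) \neq m_i(x)$ unless $m_i(x) = 0$ forces $m_i(x) = m_i(z) = 0$, and then $l_i(z) = m_i(z) = 0$ makes the common upward path trivial, so the overlap parameter $D_i$ from Lemma~\ref{mixy:lem} vanishes; this again yields $m_i(x,z) = l_i(x)$ and $l_i(x,z) = 0$, matching $m_i(x,id) + c_i$ and $l_i(x,id) + c_i$ with $c_i = 0$. I expect this bookkeeping around $D_i$ and the $m_i(x) = 0$ exception to be the only genuinely delicate point; everything else is a direct substitution into Lemma~\ref{mixy:lem} followed by an appeal to Lemma~\ref{m-ineq:lem}.
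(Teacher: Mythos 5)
Your proof is correct and takes essentially the same route as the paper's: the same three-way case analysis via Lemma~\ref{mixy:lem} under $h_i(z)=0$ (including the observation that $m_i(x)=m_i(z)$ forces $m_i(x)=m_i(z)=0$ and hence $D_i=0$), fed into Lemma~\ref{m-ineq:lem} with $y=id$. The only difference is cosmetic orientation: the paper computes $m_i(z,x)$ and $l_i(z,x)$ and compares them to $m_i(x)=m_i(id,x)$ and $l_i(x)=l_i(id,x)$, while you work with $m_i(x,z)$ and $l_i(x,z)$ against $m_i(x,id)=l_i(x)$ and $l_i(x,id)=m_i(x)$ --- mirror images under swapping arguments (which exchanges $m_i$ and $l_i$), with your version matching the literal variable pattern of Lemma~\ref{m-ineq:lem}.
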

\begin{proof}
For each $i$, we apply Lemma \ref{mixy:lem} to consider $m_i(z,x)$ and
$l_i(x,z)$ in the following cases.

If $m_i(z) < m_i(x)$, then 
\[m_i(z,x) = m_i(x) + h_i(z) = m_i(x), \quad \text{ and } \quad l_i(z,x) =
l_i(x). \] 

If $m_i(z) = m_i(x) = 0$, then the path overlap $D_i=0$ since $l_i(z) = 0$,
so that 
\[ m_i(z,x) = l_i(z) - D_i = m_i(x) \quad \text{ and } \quad 
   l_i(z,x) = l_i(x) - D_i = l_i(x). \]

Finally, if $m_i(z) > m_i(x)$, then 
$m_i(z,x) = l_i(z) = m_i(z) = m_i(x) + c_i$
and
\[l_i(z,x) = m_i(z) + h_i(x) = (m_i(x) + c_i) + (l_i(x) - m_i(x)) = l_i(x) + c_i.
\]

By Lemma \ref{m-ineq:lem}, $d(x,z) \geq d(x,id) + \sum_{i=1}^d c_i$.
\end{proof}

\section{Horofunctions and Stars in $DL_d(q)$}




\subsection{$m$-invariance of horofunctions}
\begin{definition}[The point $\zeta^i_k$]
\label{zeta:def}
For $1 \leq i \leq d$ and $k>0$, let $\zeta^i_k \in DL_d(q)$ be the point given by
$m_i(\zeta^i_k) = l_i(\zeta^i_k) = k$, following upward edges labeled ``1'', and
$m_j(\zeta^i_k)= l_j(\zeta^i_k) = 0$ for $j \neq i$.
\end{definition}

\begin{lemma}
\label{mlim:lem}
Let $x_n$ be a sequence defining a horofunction $h_x$. Then for each $1\leq i
\leq d$, $\lim m_i(x_n)$ exists in $\integers_{\geq 0} \cup \{\infty\}$.
\end{lemma}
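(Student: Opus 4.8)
The plan is to argue by contradiction, using the test points $\zeta^i_k$ of Definition~\ref{zeta:def} as probes and feeding them into the two comparison lemmas just established. Fix $i$. Since $m_i(x_n)$ is a sequence of nonnegative integers, its limit automatically exists in $\integers_{\geq 0}\cup\{\infty\}$ unless $a:=\liminf_n m_i(x_n)$ is finite while $\limsup_n m_i(x_n)>a$. Assuming this last configuration, I would pass to two subsequences of $(x_n)$: one subsequence $(u_n)$ with $m_i(u_n)=a$ for all $n$ (possible because $a$, being the liminf of an integer-valued sequence, is attained infinitely often), and another subsequence $(w_n)$ with $m_i(w_n)\geq a+1$ for all $n$ (possible because the sequence does not converge to $a$).

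The crux is that the horofunction value $h_x\bigl(\zeta^i_{a+1}\bigr)=\lim_n\bigl[d(x_n,\zeta^i_{a+1})-d(x_n,id)\bigr]$ is a single well-defined number, and every subsequence must reproduce it. I would compute it along $(u_n)$ and along $(w_n)$ and show the two forced values are incompatible. Write $z=\zeta^i_{a+1}$, and note that $z$ has $h_j(z)=0$ for all $j$, with $m_i(z)=a+1$ and $m_j(z)=0$ for $j\neq i$; this is exactly the kind of point to which Lemmas~\ref{pos-horo:lem} and~\ref{nonpos-horo:lem} apply.

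Along $(w_n)$ we have $m_j(z)\leq m_j(w_n)$ for every $j$ (since $m_i(z)=a+1\leq m_i(w_n)$ and $m_j(z)=0$ otherwise), so Lemma~\ref{nonpos-horo:lem}(\ref{nonstrict:part}) gives $d(w_n,z)\leq d(w_n,id)$ and hence $h_x(z)\leq 0$. Along $(u_n)$ the hypotheses of Lemma~\ref{pos-horo:lem} hold, since $m_i(z)=a+1\neq a=m_i(u_n)$ while for each $j\neq i$ either $m_j(u_n)=0$ or $m_j(z)=0\neq m_j(u_n)$; thus with $c_i=1$ and $c_j=0$ for $j\neq i$ that lemma yields $d(u_n,z)\geq d(u_n,id)+1$ and hence $h_x(z)\geq 1$. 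The two bounds $h_x(z)\leq 0$ and $h_x(z)\geq 1$ contradict each other, ruling out the bad configuration and proving the claim.

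I expect the genuine difficulty to have already been absorbed into Lemmas~\ref{pos-horo:lem} and~\ref{nonpos-horo:lem} and into the choice of probe $\zeta^i_{a+1}$, whose depth $a+1$ is calibrated precisely to separate the value $a$ from the values $\geq a+1$; once those are in hand the argument is bookkeeping. The only steps needing a little care are the extraction of the two subsequences from the liminf/limsup hypothesis and the remark that all subsequential horofunction limits coincide with $h_x$, because $(x_n)$ converges in $\mathscr{C}_0(X)$.
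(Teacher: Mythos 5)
Your proposal is correct and matches the paper's own proof essentially step for step: the paper likewise extracts a subsequence on which $m_i$ is constantly equal to the liminf value $m_y$ and another on which $m_i \geq m_y+1$, probes with the point $\zeta^i_{m_y+1}$, and derives the contradiction $h_x(\zeta^i_{m_y+1}) > 0$ versus $h_x(\zeta^i_{m_y+1}) \leq 0$ from Lemmas~\ref{pos-horo:lem} and~\ref{nonpos-horo:lem}. Your version merely spells out the liminf/limsup extraction and the verification of the lemmas' hypotheses more explicitly than the paper does.
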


\begin{proof} Suppose for some $i$ that $\lim m_i(x_n)$ does not exist
in $\integers_{\geq 0} \cup \{\infty\}$. Then since $m_i$ takes nonnegative integer values,
there exist two subsequences $(y_n)$ and $(z_n)$ of $(x_n)$, both approaching
$h_x$, such that $m_i(y_n)$ is a constant $m_y$ and $m_i(z_n)$ and is bounded
below by $m_y + 1$. 

Let $k = m_y + 1$. Lemma
\ref{pos-horo:lem} gives $d(y_n,\zeta^i_k) > d(y_n,id)$ for each
$n$, implying $h_x(\zeta^i_k) > 0$. But by Lemma
\ref{nonpos-horo:lem}, $d(z_n, \zeta^i_k) \leq d(z_n, id)$ for each $n$,
implying $h_x(\zeta^i_k) \leq 0$, which is a contradiction.  \end{proof}

The proof of Lemma \ref{mlim:lem} also proves:
\begin{theorem}[$m_i$-Invariance]
\label{minvariance:thm}
If for sequences $(y_n)$ and $(z_n)$ defining horofunctions $h_y$ and $h_z$,
there is an index $i$ such that $\lim m_i(y_n) \neq \lim m_i(z_n)$, then $h_y
\neq h_z$.
\end{theorem}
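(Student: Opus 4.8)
The plan is to re-run the mechanism of the proof of Lemma~\ref{mlim:lem}, but now applied directly to the two given sequences rather than to two subsequences extracted from a single convergent sequence; no argument by contradiction is needed. First I would invoke Lemma~\ref{mlim:lem} to guarantee that $a := \lim m_i(y_n)$ and $b := \lim m_i(z_n)$ both exist in $\integers_{\geq 0}\cup\{\infty\}$. Since the conclusion $h_y\neq h_z$ is symmetric in the two sequences, I may assume $a < b$; in particular $a$ is a genuine nonnegative integer (only $b$ could equal $\infty$), so that $k := a+1$ is a legitimate finite height. Because $m_i$ is integer-valued, $m_i(y_n)\to a$ forces $m_i(y_n) = a$ for all large $n$, while $m_i(z_n)\to b \geq a+1$ forces $m_i(z_n)\geq a+1$ for all large $n$.

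The separating test point will be $\zeta^i_k$ with $k = a+1$, exactly as in Lemma~\ref{mlim:lem}, and I would evaluate both horofunctions at this single point. For the $y$-side, for all large $n$ we have $m_i(\zeta^i_k) = k > a = m_i(y_n)$ while $m_j(\zeta^i_k) = 0$ for $j\neq i$; this is precisely the hypothesis of Lemma~\ref{pos-horo:lem}, with $c_i = 1$ and $c_j = 0$ for $j\neq i$, so $d(y_n,\zeta^i_k)\geq d(y_n,id)+1$ and hence $h_y(\zeta^i_k)\geq 1 > 0$. For the $z$-side, for all large $n$ we have $m_i(\zeta^i_k) = k \leq m_i(z_n)$ and $m_j(\zeta^i_k) = 0 \leq m_j(z_n)$ for $j\neq i$, which is the hypothesis of part~(\ref{nonstrict:part}) of Lemma~\ref{nonpos-horo:lem}, so $d(z_n,\zeta^i_k)\leq d(z_n,id)$ and hence $h_z(\zeta^i_k)\leq 0$. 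Therefore $h_y(\zeta^i_k) > 0 \geq h_z(\zeta^i_k)$, the two horofunctions disagree at $\zeta^i_k$, and $h_y\neq h_z$.

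The only care needed is in checking that $\zeta^i_k$ meets the coordinate-wise hypotheses of the two lemmas. By its Definition~\ref{zeta:def}, $\zeta^i_k$ is supported entirely in tree $i$ (all other $m_j$ vanish) and satisfies $h_j(\zeta^i_k)=0$ for every $j$, so the conditions on the coordinates $j\neq i$ hold automatically, and the single active coordinate $i$ is arranged to straddle the gap between $a$ and $b$. I would flag the two bookkeeping points that make the argument go through cleanly: that an integer-valued sequence converging to a finite limit is eventually constant, so the distance inequalities hold for all large $n$ rather than merely in the limit; and that the hypothesis $\lim m_i(y_n)\neq\lim m_i(z_n)$ together with Lemma~\ref{mlim:lem} rules out both limits being $\infty$, guaranteeing that the smaller limit $a$ is finite. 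Beyond these routine verifications there is no genuine obstacle, since the substantive distance estimates were already established in Lemmas~\ref{nonpos-horo:lem} and~\ref{pos-horo:lem}.
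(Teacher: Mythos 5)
Your proposal is correct and is essentially the paper's own argument: the paper proves Theorem~\ref{minvariance:thm} by remarking that the proof of Lemma~\ref{mlim:lem} already establishes it, i.e.\ by evaluating at the test point $\zeta^i_k$ with $k$ one more than the smaller limit and invoking Lemmas~\ref{pos-horo:lem} and~\ref{nonpos-horo:lem} to force $h_y(\zeta^i_k) > 0 \geq h_z(\zeta^i_k)$, exactly as you do. Your write-up simply makes explicit the bookkeeping (eventual constancy of integer sequences, finiteness of the smaller limit) that the paper leaves implicit.
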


In light of Theorem \ref{minvariance:thm}, for a horofunction $h$, we extend our
notation to let $m_i(h)=\lim m_i(x_n)$ for any $(x_n)$ approaching $h$.

\begin{corollary} \label{mconv:cor}
If a sequence $(h_n)$ of horofunctions converges to a
horofunction $h$, then $m_i(h_n)$ converges to $m_i(h)$ for each $i$.
\end{corollary}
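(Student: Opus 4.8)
The plan is to show that the map $h \mapsto m_i(h)$ is continuous from $\partial X$ (with the subspace topology inherited from the compact-open topology on $\mathscr{C}_0(DL_d(q))$) to $\integers_{\geq 0} \cup \{\infty\}$, regarded as the one-point compactification of the discrete set $\integers_{\geq 0}$. Two preliminary observations organize the argument. First, because $DL_d(q)$ is a discrete graph its compact subsets are finite, so the compact-open topology coincides with pointwise convergence; thus $h_n \to h$ means $h_n(z) \to h(z)$ for every vertex $z$. Second, for any vertex $z$ the value $h(z) = \lim_m [d(x_m,z) - d(x_m,id)]$ is an integer, being a limit of integers; hence $h_n(z) \to h(z)$ forces $h_n(z) = h(z)$ for all sufficiently large $n$.

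The key step is to reread the proof of Lemma \ref{mlim:lem} as a clean criterion that detects $m_i(h)$ from the values of $h$ at the points $\zeta^i_k$. Specifically, I claim that for every $k > 0$,
\[ m_i(h) \geq k \iff h(\zeta^i_k) \leq 0. \]
For the forward direction, if $m_i(h) \geq k$ then eventually $m_i(x_n) \geq k = m_i(\zeta^i_k)$, while $m_j(\zeta^i_k) = 0 \leq m_j(x_n)$ for $j \neq i$, so Lemma \ref{nonpos-horo:lem}(ii) gives $d(x_n, \zeta^i_k) \leq d(x_n, id)$ and hence $h(\zeta^i_k) \leq 0$. Conversely, if $m_i(h) = M < k$ then eventually $m_i(x_n) = M < k$, and Lemma \ref{pos-horo:lem} (whose hypotheses hold since $c_j = 0$ for $j \neq i$ and $c_i = k - M > 0$) yields $d(x_n, \zeta^i_k) \geq d(x_n, id) + (k - M)$, so $h(\zeta^i_k) \geq k - M \geq 1 > 0$. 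This is precisely the dichotomy already exploited in Lemma \ref{mlim:lem}, now recorded uniformly in $k$.

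With the criterion in hand, continuity follows by a short case analysis. Suppose $h_n \to h$ and fix $i$. If $m_i(h) = M < \infty$, then $h(\zeta^i_{M+1}) > 0$, and, when $M > 0$, also $h(\zeta^i_M) \leq 0$; by the second preliminary observation, for all large $n$ we have $h_n(\zeta^i_{M+1}) = h(\zeta^i_{M+1}) > 0$ and $h_n(\zeta^i_M) = h(\zeta^i_M) \leq 0$, so the criterion forces $m_i(h_n) = M$. (When $M = 0$ only the first inequality is used, giving $m_i(h_n) = 0$.) Thus $m_i(h_n)$ is eventually equal to $m_i(h)$, which is exactly convergence in $\integers_{\geq 0} \cup \{\infty\}$. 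If instead $m_i(h) = \infty$, fix an arbitrary $K > 0$; since $h(\zeta^i_K) \leq 0$, for all large $n$ we have $h_n(\zeta^i_K) = h(\zeta^i_K) \leq 0$, whence $m_i(h_n) \geq K$. As $K$ was arbitrary, $m_i(h_n) \to \infty = m_i(h)$.

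I expect the main subtlety to be bookkeeping about the target space rather than any new estimate: one must treat $\integers_{\geq 0} \cup \{\infty\}$ as the one-point compactification, so that convergence to a finite value means eventual equality while convergence to $\infty$ means eventual unboundedness, and keep the two cases cleanly separated. The integrality of horofunction values at vertices---which upgrades pointwise convergence to eventual equality at each fixed $\zeta^i_k$---is what makes both cases go through, and all the geometric content is already packaged in Lemmas \ref{pos-horo:lem} and \ref{nonpos-horo:lem}.
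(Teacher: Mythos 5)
Your proof is correct and follows essentially the same route as the paper: the paper's (very terse) proof likewise notes that discreteness of $\integers$ forces eventual pointwise equality $h_n(z) = h(z)$ and then invokes ``as in the proof of Lemma \ref{mlim:lem}'' the $\zeta^i_k$-test via Lemmas \ref{nonpos-horo:lem} and \ref{pos-horo:lem}, which is exactly your criterion $m_i(h) \geq k \iff h(\zeta^i_k) \leq 0$ made explicit. Your write-up is in fact more careful than the paper's two-sentence argument, notably in treating the case $m_i(h) = \infty$, where $m_i(h_n)$ must eventually exceed every bound rather than become constant.
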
 

\begin{proof} For each $z \in DL_d(q)$, the sequence $(h_n(z))$ must become
constant on $z$ for large enough $n$, since the codomain, $\integers$, is
discrete. By $m$-invariance, as in the proof of Lemma \ref{mlim:lem}, if there
was any $i$ such that $m_i(h_n)$ did not also eventually become constant, there
would exist $z$ for which that did not occur.  \end{proof}

\subsection{Two Horofunctions}

We will demonstrate $\alpha, \beta \in \partial DL_3(q)$ with $\beta \in
S(\alpha)$ and $\alpha \not \in S(\beta)$.

Let $(\alpha_n)$ be the sequence such that $l_1(\alpha_n) = m_2(\alpha_n) = n$,
all other $m_i, l_i = 0$, and $(\alpha_n)$ moves upward in $T_1$ choosing edges
labeled ``1''. The function $\alpha(z) = \lim_{n\rightarrow \infty}
(d(\alpha_n, z) - n$ is a Busemann function, and therefore a horofunction
(\cite{bh}, 8.17-8.18).
 While it is not difficult to calculate $\alpha$ in general, we will not need
to.

Let $(\beta_n)$ be the sequence $m_3(\beta_n) = l_3(\beta_n) = n$, $m_i(\beta) =
l_i(\beta) = 0$ for $i = 1,2$, and the path upward in $T_3$ always selects the
``1'' edge.  


\begin{lemma}
\label{hbeta:lem}
The sequence $(\beta_n)$ defines a horofunction $\beta$, and 
\[\beta(z) = m_1 + m_2 + \min_{j=1,2}\{m_j + h_3, h_j + h_3, l_j\}, \]
where each $m_i=m_i(z),l_i=l_i(z),h_i=h_i(z)$.
\end{lemma}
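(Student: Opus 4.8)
The plan is to compute $d(\beta_n, z)$ directly from the Stein--Taback formula (Theorem \ref{STdistthm}) for all $n$ large relative to $z$, show that it equals $2n$ plus the claimed expression, and then conclude that $\beta(z) = \lim_n \bigl(d(\beta_n,z) - d(\beta_n, id)\bigr)$ exists and has the asserted form. First I would record the tree-by-tree data $m_i(\beta_n, z)$ and $l_i(\beta_n, z)$. Since $p_i(\beta_n) = o_i = p_i(id)$ for $i = 1,2$, these projections agree with those of $id$, giving immediately $m_i(\beta_n, z) = m_i$ and $l_i(\beta_n, z) = l_i$ for $i = 1,2$. For $i = 3$, as soon as $n > m_3$ we are in the case $m_3(\beta_n) > m_3(z)$ of Lemma \ref{mixy:lem}; using $h_3(\beta_n) = 0$ this yields $m_3(\beta_n, z) = l_3(\beta_n) = n$ and $l_3(\beta_n, z) = m_3(\beta_n) + h_3(z) = n + h_3$.

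Next I would substitute these values into the six functions $f_\sigma(\beta_n, z) = \max\{f_{\sigma,2}, f_{\sigma,3}\}$ of Definition \ref{mlformulas:def}, one permutation at a time. The point that keeps the bookkeeping tractable is the position of $T_3$ under $\sigma$: for each $\sigma$ exactly one of $f_{\sigma,2}, f_{\sigma,3}$ gives $T_3$ a ``double'' contribution of size $\sim 2n$ (namely $2m_3(\beta_n,z) = 2n$ when $\sigma(1) = 3$, and $m_3(\beta_n,z) + l_3(\beta_n,z) = 2n + h_3$ when $T_3$ is in the middle or last slot), while the other collects only a single contribution of size $\sim n$; hence the former dominates for $n$ past a threshold depending only on $z$. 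Thus each $f_\sigma(\beta_n, z)$ is eventually equal to $2n$ plus a quantity independent of $n$. Listing these six quantities and rewriting them via $h_i = l_i - m_i$ (and $\sum_i h_i = 0$), they become precisely $m_1 + m_2$ added to the six numbers $m_j + h_3$, $h_j + h_3$, $l_j$ for $j \in \{1,2\}$.

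Taking the minimum over $\sigma$ then yields, for all sufficiently large $n$,
\[ d(\beta_n, z) = 2n + m_1 + m_2 + \min_{j=1,2}\{m_j + h_3,\ h_j + h_3,\ l_j\}; \]
in particular the case $z = id$ gives $d(\beta_n, id) = 2n$, so $d(\beta_n, z) - d(\beta_n, id)$ is eventually constant in $n$, its limit exists, and $\beta(z)$ equals the asserted formula. Because $d(\beta_n, id) = 2n \to \infty$, the limit is a genuine horofunction rather than some $f_x$ with $x \in DL_3(q)$. I expect the main obstacle to be purely organizational: correctly identifying, for each of the six permutations, which of $f_{\sigma,2}$ and $f_{\sigma,3}$ wins for large $n$, and then performing the $h_i = l_i - m_i$ substitutions so that the six permutation values align with the six terms inside the stated minimum. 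There is no conceptual difficulty beyond keeping indices straight, but this is where a slip is most likely, so I would verify the algebra against the sanity check $\beta(id) = 0$, which holds since all $m_i = l_i = h_i = 0$ there.
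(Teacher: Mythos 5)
Your proposal is correct and takes essentially the same approach as the paper's proof: you compute $m_i(\beta_n,z)$ and $l_i(\beta_n,z)$ via Lemma \ref{mixy:lem}, evaluate all six $f_\sigma(\beta_n,z)$ for $n$ large (your bookkeeping by the slot of $T_3$ under $\sigma$ reproduces exactly the ``coefficient of $n$'' analysis in the paper's table, Figure \ref{betandist:fig}), and extract the common $2n + m_1 + m_2$ after the substitution $l_j = m_j + h_j$, arriving at the same minimum and the same limit computation against $d(\beta_n,id) = 2n$. The only differences are presentational, including your (correct, and slightly more careful than the paper's) closing remark that $d(\beta_n,id)\to\infty$ guarantees the limit is a genuine boundary point rather than some $f_x$ with $x \in DL_3(q)$.
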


\begin{proof}
Let $z \in DL_3(q)$ and denote $m_i(z)$, $l_i(z)$, and $h_i(z)$ by
$m_i$, $l_i$, and $h_i$ for convenience.  Assume $n >> m_i,l_i$ for each $i$. 
We have $m_3(\beta_n,z) = n$,
$l_3(\beta_n,z) = n+h_3$, and $m_i(\beta_n,z) = m_i$ and
$l_i(\beta_n, z) = l_i$ for $i=1,2$. Figure
\ref{betandist:fig} shows the various $f_{\sigma}(\beta_n,z)$.
The only term all $f_{\sigma}$ have in common is $2n$, but we can rewrite
$l_i = m_i + h_i$, and after this translation all terms have an
additional $m_1 + m_2$ in common. This yields: 
\[ d(\beta_n,z) = 2n + m_1 + m_2 + \min\{m_1 + h_3,\ m_2 + h_3,\ l_1,\ h_2 +
h_3,\ h_1 + h_3,\ l_2\}. \]
This yields $d(\beta_n, id) = 2n$,  and $\beta(z) =
\lim_{n\rightarrow \infty} d(\beta_n,z) - 2n$.
\end{proof}

\begin{figure}
\begin{center}
\begin{tabular}{|c|c|c|c|c|}
\hline
$\sigma$ & $i$ & $f_{\sigma,i}$ & coeff. of $n$ & $f_{\sigma}$ \\
\hline
(1)  & 2 & $m_1 + m_2 + l_2 + (n + h_3)$ & 1  & \\ 
     & 3 & $2m_1 + m_2 + n + (n + h_3)$  & 2 & $2n + 2m_1 + m_2 + h_3$ \\
\hline
(12) & 2 & $m_2 + m_1 + l_1 + (n + h_3)$ & 1  & \\
     & 3 & $2m_2 + m_1 + n + (n + h_3)$  & 2 & $2n + m_1 + 2m_2 + h_3$ \\
\hline
(13) & 2 & $n + m_2 + l_2 + l_1$       & 1  & \\
     & 3 & $2n + m_2 + m_1 + l_1$      & 2 & $2n + m_1 + l_1 + m_2$ \\
\hline
(23) & 2 & $m_1 + n+ (n+h_3) + l_2$      & 2 & $2n + m_1 + l_2 + h_3$ \\
     & 3 & $2m_1 + n+ m_2 + l_2$       & 1  & \\
\hline
(123)& 2 & $m_2 + n + (n+h_3) + l_1$     & 2 & $2n + l_1 + m_2 + h_3$ \\
     & 3 & $2m_2 + n + m_1 + l_1$      & 1  & \\
\hline
(132)& 2 & $n + m_1 + l_1 + l_2$       & 1  & \\
     & 3 & $2n + m_1 + m_2 + l_2$      & 2 & $2n + m_1 + m_2 + l_2$  \\ 
\hline
\end{tabular}
\end{center}
\caption{Distance from $\beta_n$ to an arbitrary point $z$, when $n$ is very
large relative to the parameters of $z$.}
\label{betandist:fig}
\end{figure}


\subsection{Neighborhoods and Stars}
\begin{lemma} $\beta \in S(\alpha)$. \end{lemma}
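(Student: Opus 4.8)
The plan is to obtain $\beta \in S(\alpha)$ as a direct application of the sequential criterion in Lemma \ref{seqstarinc:lem}, using the two defining sequences already in hand. Taking $\bar x = \beta$ and $\bar y = \alpha$ in that lemma, I would set $x_n = \beta_n$ and $y_n = \alpha_n$; since $\beta_n \to \beta$ (Lemma \ref{hbeta:lem}) and $\alpha_n \to \alpha$ by construction, the whole statement reduces to verifying the single inequality $d(\beta_n,\alpha_n) \le d(\beta_n,id)$ for every $n$. The right-hand side is already recorded in the proof of Lemma \ref{hbeta:lem}, namely $d(\beta_n,id) = 2n$, so the entire problem collapses to showing $d(\beta_n,\alpha_n) \le 2n$.

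To estimate $d(\beta_n,\alpha_n)$ I would first assemble the relative tree data using Lemma \ref{mixy:lem}, with $x=\beta_n$ (so $m_1=l_1=m_2=l_2=0$ and $m_3=l_3=n$) and $y=\alpha_n$ (so $m_1=0,\,l_1=n$, $m_2=n,\,l_2=0$, $m_3=l_3=0$). Tree $1$ falls in the equal-$m$ case (with vanishing overlap, since $l_1(\beta_n)=0$), tree $2$ in the case $m_2(\beta_n)<m_2(\alpha_n)$, and tree $3$ in the case $m_3(\beta_n)>m_3(\alpha_n)$. These give
\[ \bigl(m_i(\beta_n,\alpha_n),\, l_i(\beta_n,\alpha_n)\bigr) = (0,n),\ (n,0),\ (n,n) \quad \text{for } i=1,2,3 \text{ respectively}. \]

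With this data I would bound $d(\beta_n,\alpha_n)$ from above using the Stein--Taback formula (Theorem \ref{STdistthm}): because $d = \min_\sigma f_\sigma$, it suffices to exhibit one permutation with $f_\sigma \le 2n$. The transposition $\sigma$ interchanging trees $2$ and $3$ (i.e.\ $\sigma(1)=1,\sigma(2)=3,\sigma(3)=2$) works, since $f_{\sigma,2} = m_1+m_3+l_3+l_2 = 2n$ and $f_{\sigma,3} = 2m_1+m_3+m_2+l_2 = 2n$, whence $f_\sigma = 2n$ and so $d(\beta_n,\alpha_n)\le 2n$. Combining this with $d(\beta_n,id)=2n$ gives the required inequality (in fact an equality), and Lemma \ref{seqstarinc:lem} then delivers $\beta \in S^{id}(\alpha) \subseteq S(\alpha)$. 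I do not anticipate a genuine obstacle; the only delicate point is the tree-by-tree bookkeeping in Lemma \ref{mixy:lem}, in particular correctly sorting each tree into one of the three cases and noting that the path-overlap term in tree $1$ vanishes. Once the relative data is correct, the upper bound is a one-line check against the well-chosen permutation, so the crux is simply the routine but error-prone distance computation rather than any conceptual difficulty.
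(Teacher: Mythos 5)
Your proposal is correct and follows the paper's proof essentially verbatim in structure: the same reduction via Lemma \ref{seqstarinc:lem} with $x_n=\beta_n$, $y_n=\alpha_n$, and the same key fact $d(\beta_n,id)=2n$, so that everything rests on $d(\beta_n,\alpha_n)\le 2n$. The only (harmless) divergence is in verifying that bound: the paper exhibits an explicit $2n$-step path from $\beta_n$ to $\alpha_n$ (moving in $T_3$ while compensating in $T_2$ for $n$ steps, then in $T_1$ for $n$ steps), whereas you derive it from Lemma \ref{mixy:lem} and the Stein--Taback formula via the permutation sending $(1,2,3)\mapsto(1,3,2)$ --- your relative data $(0,n),(n,0),(n,n)$ and the values $f_{\sigma,2}=f_{\sigma,3}=2n$ all check out.
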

\begin{proof}
As noted, $d(\beta_n, id) = 2n$, by moving as necessary in $T_3$ and
compensating in either $T_1$ or $T_2$. We cannot apply Lemma \ref{hbeta:lem},
as the parameters of $\alpha_n$ grow along with $\beta_n$. But we can find
$d(\beta_n,\alpha_n)$ directly: we can move from $\beta_n$ to
$\alpha_n$ in $2n$ steps as well, by moving in $T_3$ as necessary, compensating
in $T_2$ for the first $n$ steps and then in $T_1$ for the final $n$ steps. So
$d(\beta_n,\alpha_n) = 2n = d(\beta_n,id)$. By Lemma \ref{seqstarinc:lem},
$\beta \in S(\alpha)$.
\end{proof}

Recall, the point $\zeta^i_k$, introduced in Definition \ref{zeta:def}, has $m_i
= l_i = k$, and all other parameters trivial.

\begin{lemma}
\label{boundm12frombeta:lem}
Let $z\in DL_3(q)$ have $m_i(z) > 0$ for $i=1$ or $2$. 
Then $d(z,\zeta^i_1) - d(z,id) \neq \beta(\zeta^i_1)$.
\end{lemma}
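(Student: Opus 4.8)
The plan is to compute the target value $\beta(\zeta^i_1)$ exactly and then to bound $d(z,\zeta^i_1) - d(z,id)$ from above by a number strictly smaller than that value, so that the two cannot coincide. Since trees $T_1$ and $T_2$ enter the formula of Lemma \ref{hbeta:lem} symmetrically, I would handle the cases $i=1$ and $i=2$ in parallel, with the hypothesis $m_i(z) > 0$ applying to whichever index is under consideration.

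First I would evaluate $\beta(\zeta^i_1)$. The point $\zeta^i_1$ has $m_i = l_i = 1$ (hence $h_i = 0$) and all remaining tree-parameters equal to $0$; in particular $h_3 = 0$. Substituting these values into
\[ \beta(z) = m_1 + m_2 + \min_{j=1,2}\{m_j + h_3,\ h_j + h_3,\ l_j\} \]
from Lemma \ref{hbeta:lem}, the minimum is attained by the term $h_j + h_3 = 0$ for whichever $j \in \{1,2\}$ differs from $i$, so that $\beta(\zeta^i_1) = m_1 + m_2 = 1$.

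Next I would bound the difference $d(z,\zeta^i_1) - d(z,id)$. The key point is that $\zeta^i_1$ satisfies $h_j(\zeta^i_1) = 0$ for every $j$, with $m$-coordinates $m_i(\zeta^i_1) = 1$ and $m_j(\zeta^i_1) = 0$ for $j \neq i$. Because $m_i$ is integer-valued, the strict hypothesis $m_i(z) > 0$ promotes to $m_i(z) \geq 1 = m_i(\zeta^i_1)$, while $m_j(\zeta^i_1) = 0 \leq m_j(z)$ holds trivially for $j \neq i$. Thus $m_j(\zeta^i_1) \leq m_j(z)$ for all $j$, and applying part \ref{nonstrict:part} of Lemma \ref{nonpos-horo:lem} --- with $\zeta^i_1$ in the role of the point whose $h$-coordinates vanish and $z$ in the role of the arbitrary point --- yields $d(z,\zeta^i_1) \leq d(z,id)$. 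Therefore $d(z,\zeta^i_1) - d(z,id) \leq 0 < 1 = \beta(\zeta^i_1)$, and in particular the two are unequal.

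I do not expect a serious obstacle here; the content is concentrated in two routine-but-load-bearing checks. The first is the evaluation of the minimum in Lemma \ref{hbeta:lem}, which fixes the comparison value at $\beta(\zeta^i_1) = 1$. The second, and the only place demanding care, is matching $z$ and $\zeta^i_1$ to the hypotheses of part \ref{nonstrict:part} of Lemma \ref{nonpos-horo:lem}: one must observe that integrality converts $m_i(z) > 0$ into precisely the non-strict comparison $m_i(\zeta^i_1) \leq m_i(z)$ that the lemma requires, after which the non-strict upper bound $d(z,\zeta^i_1) \leq d(z,id)$ does all the work.
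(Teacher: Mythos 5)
Your proof is correct and takes essentially the same approach as the paper: both arguments use part (\ref{nonstrict:part}) of Lemma \ref{nonpos-horo:lem} (with integrality upgrading $m_i(z) > 0$ to $m_i(z) \geq 1 = m_i(\zeta^i_1)$) to get $d(z,\zeta^i_1) - d(z,id) \leq 0$, and then conclude by noting $\beta(\zeta^i_1) > 0$. The only minor divergence is in the second half: you evaluate $\beta(\zeta^i_1) = 1$ exactly from the formula in Lemma \ref{hbeta:lem}, whereas the paper instead cites Lemma \ref{pos-horo:lem} (applied to the defining sequence $(\beta_n)$, which is trivial in $T_1$ and $T_2$) to obtain positivity without computing the exact value; both justifications are valid and yield the same strict separation $d(z,\zeta^i_1) - d(z,id) \leq 0 < \beta(\zeta^i_1)$.
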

\begin{proof}
By Lemma \ref{nonpos-horo:lem}, $d(z,\zeta) \leq d(z,id)$, so $d(z,\zeta) -
d(z,id) \leq 0$, while Lemma \ref{pos-horo:lem} ensures $\beta(\zeta) > 0$.

\end{proof}

\begin{definition}
For $j \in \{1,2\}$, $k \in \integers_{\geq 0}$, and $\epsilon \in \{0,1,...,q-1\}$,
let $\nu^{j,\epsilon}_k$ be the point in $DL_3(q)$ with
$d(\nu^{j,\epsilon}_k,id) = k$ obtained by moving $k$ edges (all labeled
$\epsilon$) upward in $T_j$ and $k$ edges downward in $T_3$. So
$l_j(\nu^{j,\epsilon}_k) = m_3(\nu^{j,\epsilon}_k) = k$, while 
all other $m_i,l_i=0$.
\end{definition}

\begin{lemma} 
\label{boundT12frombeta:lem} 
Let $z \in DL_3(q)$ have $m_1(z) = m_2(z) = 0$, and assume 
$l_j(z) > 0$ for some $j \in \{1,2\}$.
Choose label $\epsilon$ not equal to the first edge joining $o_j$ to $p_j(z)$.  
Then $\nu = \nu^{j,\epsilon}_1$ has $d(z,\nu) - d(z,id) \neq \beta(\nu)$.  
\end{lemma}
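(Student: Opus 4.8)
The plan is to reduce the claim to the single inequality $d(z,\nu) \ge d(z,id)$. Indeed, Lemma~\ref{hbeta:lem} will give $\beta(\nu) = -1 < 0$, so once $d(z,\nu) - d(z,id) \ge 0$ is established we get $d(z,\nu)-d(z,id) \ge 0 > -1 = \beta(\nu)$, which is exactly the desired inequality. By the symmetry of the trees $T_1$ and $T_2$, both in the formula of Lemma~\ref{hbeta:lem} and in the definition of $\nu$, I may assume $j=1$. Writing $a = l_1(z) \ge 1$, $b = l_2(z)$, and $c = l_3(z)$, the condition $\sum_i h_i(z) = 0$ together with $m_1(z)=m_2(z)=0$ forces $m_3(z) = a+b+c$. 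First I would record the parameters of $\nu = \nu^{1,\epsilon}_1$: namely $m_1=m_2=l_3=0$ and $l_1 = m_3 = 1$, so $h_1(\nu)=1$, $h_2(\nu)=0$, $h_3(\nu)=-1$. Substituting these into Lemma~\ref{hbeta:lem} gives $\beta(\nu) = 0 + (-1) = -1$.

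Next I would pin down $d(z,id)$. Observation~\ref{treedist:obs} gives $d(z,id) \ge d_3(p_3(z),o_3) = m_3(z)+l_3(z) = a+b+2c$, and the reverse inequality follows from the distance formula (Theorem~\ref{STdistthm}) using a permutation that places $T_3$ first, so $d(z,id) = a+b+2c$. (In fact only the upper bound $d(z,id) \le a+b+2c$ is needed for the conclusion.)

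The heart of the argument is the computation of the projection data $m_i(z,\nu), l_i(z,\nu)$ and its insertion into the distance formula. In $T_1$, the choice of $\epsilon$ off the first edge of the path $o_1 \to p_1(z)$ forces $p_1(z) \curlywedge p_1(\nu) = o_1$, so $m_1(z,\nu)=a$ and $l_1(z,\nu)=1$. In $T_2$ the point $p_2(\nu)=o_2$ gives $m_2(z,\nu)=b$ and $l_2(z,\nu)=0$. The delicate case is $T_3$: there $p_3(\nu)$ is the immediate predecessor of $o_3$, which lies on the descending path from $o_3$ to the branch point $w = p_3(z)\curlywedge o_3$ (at depth $m_3(z)=a+b+c \ge 1$); hence $p_3(z)\curlywedge p_3(\nu)=w$, giving $m_3(z,\nu)=l_3(z)=c$ and $l_3(z,\nu) = m_3(z)-1 = a+b+c-1$. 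I expect this $T_3$ step to be the main obstacle, precisely because it is where passing from $id$ to $\nu$ \emph{decreases} an $l$-coordinate (here $l_3$ drops by one), so the monotonicity hypotheses of Lemma~\ref{m-ineq:lem} fail and no cheap comparison is available; the single unit saved in $T_3$ is exactly the unit spent in $T_1$ on the off-path edge $\epsilon$.

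Finally I would feed these six values into the quantities $f_{\sigma,i}(z,\nu)$ and verify, by a short case check over $\sigma \in \Sigma_3$, that for every $\sigma$ there is some $i\in\{2,3\}$ with $f_{\sigma,i}(z,\nu) \ge a+b+2c$ — the min--max structure of the distance formula absorbs the competing $\pm 1$ contributions of $T_1$ and $T_3$. Observation~\ref{bigindex:obs} then yields $d(z,\nu) \ge a+b+2c = d(z,id)$, so $d(z,\nu)-d(z,id) \ge 0 > -1 = \beta(\nu)$, which proves $d(z,\nu)-d(z,id) \neq \beta(\nu)$.
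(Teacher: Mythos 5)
Your proposal follows essentially the same route as the paper's own proof: reduce the claim to $d(z,\nu)\ge d(z,id)$ (since Lemma \ref{hbeta:lem} gives $\beta(\nu)=-1$), compute the six quantities $m_i(z,\nu)$, $l_i(z,\nu)$ via Lemma \ref{mixy:lem} — your values $m(z,\nu)=(a,b,c)$, $l(z,\nu)=(1,0,a+b+c-1)$ agree exactly with the paper's — and conclude via Observation \ref{bigindex:obs} by a case check over $\Sigma_3$. The case check you defer as routine is precisely the table the paper writes out, and it does go through for all six permutations (e.g.\ $f_{(132),3}=2c+a+b=a+b+2c$ with equality, and $f_{(23),2}=2a+b+2c-1\ge a+b+2c$ using $a\ge 1$). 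One small slip worth fixing: the needed upper bound $d(z,id)\le a+b+2c$ comes from a permutation placing $T_3$ \emph{last}, not first. With $\sigma(1)=3$ the leading term $2m_3(z)=2(a+b+c)$ already forces $f_{\sigma,3}>a+b+2c$ (as $a\ge 1$), whereas $\sigma(3)=3$ gives $f_{\sigma,3}=2m_{\sigma(1)}+m_{\sigma(2)}+m_3+l_3=a+b+2c$ and $f_{\sigma,2}\le \max\{a,b\}+c$; alternatively one can simply exhibit a path of length $a+b+2c$ as the paper does, moving in $T_3$ and compensating in $T_1$ and $T_2$.
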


\begin{proof}
By Lemma \ref{hbeta:lem}, $\beta(\nu) = -1$. Lemma
\ref{bigindex:obs} will ensure that $d(z,\nu) - d(z,id) \geq 0$ if for each
$\sigma \in \Sigma_d$, there is at least one $i$ such that $f_{\sigma,i}(z,\nu) \geq
d(z,id)$.

For each $k=1,2,3$, denote $m_k(z)$ and $l_k(z)$ by $m_k$ and $l_k$ respectively.
Assume for concreteness that $l_1 > 0$. Note that $m_3 = \sum_{k=1}^3 l_k$.
Also, $d(z,id) = d_3(o_3, p_3(z)) = l_1 + l_2 + 2l_3$ since Observation
\ref{treedist:obs} ensures this is a lower bound on $d(z,id)$, and it can be
realized by moving in $T_3$ and compensating in $T_1$ and $T_2$ as appropriate.

We have the following distances, by Lemma \ref{mixy:lem}.
\[ \begin{array}{ccc}
m_1(z, \nu) = l_1 \quad & m_2(z,\nu) = l_2 & m_3(z,\nu) = l_3  \\
l_1(z, \nu) = 1 \quad & l_2(z,\nu) = 0 & l_3(z,\nu) = \sum l_k - 1 \\
\end{array} \]

The following table provides, for each $\sigma$, an $f_{\sigma,i}(z,\nu) \geq l_1 + l_2 + 2l_3$:
\begin{center}
\begin{tabular}{|l|l|}
\hline
$f_{(1),3} = 2l_1 + l_2 + l_3 + (\sum l_k - 1)$  &
$f_{(12),3} = 2l_2 + l_1  + l_3 + (\sum l_k - 1)$ \\ \hline
$f_{(13),3} = 2l_3 + l_2 + l_1 + 1$              & 
$f_{(23),2} = l_1 + l_3 + (\sum l_k - 1)$         \\ \hline
$f_{(123),2} = l_2 + l_3 + (\sum l_k - 1) + 1$   &
$f_{(132),3} = 2l_3 + l_1 + l_2$                  \\ \hline
\end{tabular}
\end{center}



If $l_2 > 0$ instead, the calculation is symmetric.
\end{proof}


\begin{definition}
For any finite set $F$, let
\[B(F) = \{ f \in \overline{DL_3(q)} \mid f|_{F} = \beta|_{F} \}.\]
and set
\[F_0 = \{\zeta^1_1,\ \zeta^1_2,\ \nu_1^{1,0},\ \nu_1^{1,1},\ \nu_1^{2,0},\
\nu_1^{2,1}\}\]


Because the functions in $\overline{DL_3(q)}$ map from the discrete space
$DL_3(q)$ to the discrete space $\integers$, when
we consider the compact-open topology on $\overline{DL_3(q)}$, the collection
$\{B(F)\}$, $F$ a finite set, is a neighborhood basis for $\beta$.
Thus $B(F_0)$ is an open set about $\beta$.
\end{definition}

\begin{theorem}
\label{onlybsbeta:thm}
A sequence $(b_n)\subset DL_3(q)$ approaches $\beta$ if and only if it has a
tail whose projections are trivial in $T_1$ and $T_2$ and $m_3(b_n) \rightarrow \infty$. 
\end{theorem}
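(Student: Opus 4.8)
The plan is to prove the two directions separately, treating the \emph{if} direction as a near-verbatim re-run of the computation behind Lemma~\ref{hbeta:lem} and reserving the real work for the \emph{only if} direction, where the finite test set $F_0$ does the heavy lifting. For the \emph{if} direction, suppose that after discarding finitely many terms each $b_n$ has $p_1(b_n) = o_1$ and $p_2(b_n) = o_2$, and write $N = m_3(b_n)$, so that $l_3(b_n) = N$ by the height condition and $N \to \infty$. The key observation is that the distance computation summarized in Figure~\ref{betandist:fig} never used the particular labels along the upward path of $\beta_n$ in $T_3$. Indeed, applying Lemma~\ref{mixy:lem} with $N$ large relative to the fixed $z$ gives $m_i(b_n,z) = m_i(z)$ and $l_i(b_n,z) = l_i(z)$ for $i = 1,2$ (using $m_i(b_n) = l_i(b_n) = 0$, so the overlap $D_i = 0$ when $m_i(z) = 0$), together with $m_3(b_n,z) = N$ and $l_3(b_n,z) = N + h_3(z)$ from the case $m_3(b_n) > m_3(z)$. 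These are exactly the inputs feeding Lemma~\ref{hbeta:lem}, so for $n$ large one obtains $d(b_n,z) - d(b_n,id) = \beta(z)$, with $d(b_n,id) = 2N$ by Observation~\ref{treedist:obs}. Since the codomain $\integers$ is discrete, eventual equality for every $z$ is precisely convergence $b_n \to \beta$.

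For the \emph{only if} direction, suppose $b_n \to \beta$. Because $\{B(F)\}$ is a neighborhood basis for $\beta$ and $B(F_0)$ is open about $\beta$, we have $b_n \in B(F_0)$ for all large $n$; that is, $b_n$ agrees with $\beta$ at every point of $F_0$. I would then read Lemmas~\ref{boundm12frombeta:lem} and~\ref{boundT12frombeta:lem} contrapositively against this agreement. The $\zeta$-points of $F_0$ rule out nontrivial $m$-coordinates: if $m_j(b_n) > 0$ for some $j \in \{1,2\}$, Lemma~\ref{boundm12frombeta:lem} forces disagreement on the corresponding $\zeta$-point, contradicting $b_n \in B(F_0)$, so $m_1(b_n) = m_2(b_n) = 0$. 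Granting this, the $\nu$-points rule out nontrivial $l$-coordinates: if $l_j(b_n) > 0$, Lemma~\ref{boundT12frombeta:lem} forces disagreement on $\nu^{j,\epsilon}_1$ for a label $\epsilon$ differing from the first upward edge of $b_n$ in $T_j$.

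The step I expect to be the main obstacle is precisely this last one, because the witnessing label $\epsilon$ in Lemma~\ref{boundT12frombeta:lem} depends on $b_n$, whereas $F_0$ must be a single finite set working for \emph{all} $b_n$ at once. I would resolve this by noting that among the two labels $\{0,1\}$ at least one differs from any single prescribed first edge, regardless of the valence $q$; consequently at least one of $\nu^{j,0}_1, \nu^{j,1}_1$ witnesses any $l_j(b_n) > 0$, and since \emph{both} of these points lie in $F_0$, agreement on $F_0$ already forces $l_j(b_n) = 0$. Applying this for $j = 1,2$ yields trivial $T_1,T_2$ projections for all large $n$.

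It remains to show $m_3(b_n) \to \infty$. Here I would invoke Corollary~\ref{mconv:cor}: convergence $b_n \to \beta$ gives $m_3(b_n) \to m_3(\beta)$, and $m_3(\beta) = \lim_n m_3(\beta_n) = \infty$ since $m_3(\beta_n) = n$. (As an alternative that avoids the extended-value bookkeeping, once trivial $T_1,T_2$ projections are in hand one has $d(b_n,id) = 2m_3(b_n)$ by Observation~\ref{treedist:obs}; a bounded subsequence of $m_3(b_n)$ would then be a constant subsequence of points within a fixed radius of $id$, converging inside $DL_3(q)$ rather than to the boundary point $\beta$, contradicting uniqueness of limits in the Hausdorff space $\overline{DL_3(q)}$.) Either route completes the characterization.
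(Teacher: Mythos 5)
Your proof is correct and follows essentially the same route as the paper's: the \emph{only if} direction via eventual membership in $B(F_0)$ combined with Lemmas \ref{boundm12frombeta:lem} and \ref{boundT12frombeta:lem} (your two-label observation resolving the $\epsilon$-dependence is exactly why both $\nu^{j,0}_1$ and $\nu^{j,1}_1$ appear in $F_0$, and your appeal to ``the corresponding $\zeta$-point'' presumes the evident intended reading of $F_0$ as containing $\zeta^2_1$ rather than the printed $\zeta^1_2$), and the \emph{if} direction by rerunning the computation of Lemma \ref{hbeta:lem} with $n$ replaced by $m_3(b_n)$ after noting label-independence. You in fact supply details the paper leaves implicit---most notably the argument that $m_3(b_n)\to\infty$, where your parenthetical bounded-subsequence argument is the cleaner route, since Corollary \ref{mconv:cor} as stated concerns sequences of horofunctions rather than points (the direct citation would be Lemma \ref{mlim:lem} together with Theorem \ref{minvariance:thm}).
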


\begin{proof}
By Lemmas \ref{boundm12frombeta:lem} and
\ref{boundT12frombeta:lem}, no $x \in DL_3(q)$ that is nontrivial in $T_1$ or $T_2$ lies
in $B(F_0)$. This proves the forward direction.

Now suppose $(b_n)$ is trivial in $T_1$ and $T_2$ and has $m_3(b_n)\rightarrow
\infty$. Then $l_3(b_n) = m_3(b_n)$. The calculations in the proof of Lemma
\ref{hbeta:lem} show that when $n$ is large relative to the parameters of $z
\in DL_3(q)$, the choice of upward path in $T_3$ is irrelevant to the
calculation. We can replace $n$ with $m_3(b_n)$ in that calculation, and we
obtain the same result.  \end{proof}


\begin{corollary}
The horofunction $\beta$ is isolated in $\partial DL_3(q)$. 
\end{corollary}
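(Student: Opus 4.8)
The plan is to show that the open set $B(F_0)$ already isolates $\beta$, i.e. that $B(F_0) \cap \partial DL_3(q) = \{\beta\}$. Since $B(F_0)$ is open in $\overline{DL_3(q)}$, this makes $\{\beta\}$ open in the subspace $\partial DL_3(q)$, which is exactly the assertion that $\beta$ is isolated. So the whole task reduces to ruling out the existence of a second boundary point in $B(F_0)$.

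To carry this out I would take an arbitrary horofunction $h \in B(F_0) \cap \partial DL_3(q)$ and prove $h = \beta$. Since $h$ lies in the closure $\overline{DL_3(q)}$, there is a sequence $(b_n) \subset DL_3(q)$ with $b_n \to h$; because $B(F_0)$ is open and contains $h$, after passing to a tail I may assume $(b_n) \subset B(F_0)$. By Lemmas \ref{boundm12frombeta:lem} and \ref{boundT12frombeta:lem} (as already assembled in the forward direction of Theorem \ref{onlybsbeta:thm}), no vertex nontrivial in $T_1$ or $T_2$ meets $B(F_0)$, so each $b_n$ must be trivial in $T_1$ and $T_2$.

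Next I would promote this to $m_3(b_n) \to \infty$. This is the one place that requires a genuine, if short, argument: $DL_3(q)$ is locally finite and hence proper, so any bounded subsequence of $(b_n)$ would subconverge to a vertex of $DL_3(q)$, contradicting $b_n \to h \in \partial DL_3(q)$. Thus $d(b_n, id) \to \infty$, and for a vertex trivial in $T_1$ and $T_2$ one has $h_1 = h_2 = 0$, hence $h_3 = 0$, $m_3 = l_3$, and $d(b_n, id) = 2m_3(b_n)$; therefore $m_3(b_n) \to \infty$. I expect this properness step to be the main (and only mild) obstacle, since everything else is bookkeeping.

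Finally, the reverse direction of Theorem \ref{onlybsbeta:thm} applies verbatim: a sequence with a tail trivial in $T_1$ and $T_2$ and with $m_3 \to \infty$ approaches $\beta$. Hence $b_n \to \beta$, and since also $b_n \to h$, uniqueness of limits in $\overline{DL_3(q)}$ gives $h = \beta$. This establishes $B(F_0) \cap \partial DL_3(q) = \{\beta\}$ and completes the proof; the substantive content is already captured by Theorem \ref{onlybsbeta:thm}, with properness supplying the only additional ingredient.
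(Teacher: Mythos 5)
Your proof is correct and follows essentially the same route as the paper: both isolate $\beta$ via the open neighborhood $B(F_0)$ and reduce everything to Theorem \ref{onlybsbeta:thm} through Lemmas \ref{boundm12frombeta:lem} and \ref{boundT12frombeta:lem}. You are in fact slightly more careful than the paper's own (very terse) proof, which silently assumes that a vertex sequence approaching a boundary point in $B(F_0)$ satisfies $m_3(b_n) \to \infty$; your properness argument --- local finiteness forces $d(b_n,id) = 2m_3(b_n) \to \infty$ for sequences trivial in $T_1,T_2$ that approach the boundary --- supplies exactly that implicit step.
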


\begin{proof}
Let $(h_n) \subset \partial DL_3(q)$ be a sequence of horofunctions approaching
$\beta$. Then $(h_n)$ has a tail $(t_n)$ lying in $B(F_0)$. For each $n$, any
sequence $(x_k) \in DL_3(q)$ approaching $t_n$ also has a tail in $B(F_0)$. By
Theorem \ref{onlybsbeta:thm}. $(t_n)$ is the constant sequence $(\beta)$.
\end{proof}

It is worth noting that while $\beta$ is isolated in the  boundary, $S(\beta)
\neq \{\beta\}$. For example, let $Y\subseteq \{1,2,3\}$ contain 3 and at least
one other index.  Let $(\gamma_n)$ be any sequence with $m_i(\gamma_n) =
l_i(\gamma_n) = n$ for $i \in Y$, and $m_i(\gamma_n) = l_i(\gamma,_n) = 0$
otherwise. It can be shown that $(\gamma_n)$ defines a horofunction $\gamma
\neq \beta$; and one can calculate $d(\gamma_n,id) \geq d(\gamma_n,\beta_n)$ for
each $n$, so that Lemma \ref{seqstarinc:lem} ensures $h_\gamma \in S(\beta)$.

\begin{corollary}
For $k \geq 0$, let 
\[N_k(\beta) = \{ z\in DL_3(q)\,|\, m_i(z) = l_i(z) = 0 \text{ for } i=1,2 \text{ and } m_3(z) =
l_3(z) \geq k\} \cup \{\beta\}.\] 
Then the family $\{N_k(\beta)\}$, $k \geq 0$, is a neighborhood basis for
$\beta \in \overline{DL_3(q)}$.
\end{corollary}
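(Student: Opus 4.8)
The plan is to compare the family $\{N_k(\beta)\}$ against the neighborhood basis $\{B(F)\}$ already in hand. It suffices to prove two mutual refinements: \textbf{(I)} for every finite $F \subset DL_3(q)$ there is a $k$ with $N_k(\beta) \subseteq B(F)$; and \textbf{(II)} for every $k \geq 0$ there is a finite $F \subset DL_3(q)$ with $B(F) \subseteq N_k(\beta)$. Granting these, (II) shows each $N_k(\beta)$ contains some $B(F)$, which is a neighborhood of $\beta$, so $N_k(\beta)$ is itself a neighborhood of $\beta$; and for any neighborhood $U$ of $\beta$, choosing $F$ with $B(F) \subseteq U$ and then applying (I) gives $N_k(\beta) \subseteq B(F) \subseteq U$. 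Together these two facts say exactly that $\{N_k(\beta)\}$ is a neighborhood basis at $\beta$.

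For (I), recall that every $z \in N_k(\beta) \setminus \{\beta\}$ is trivial in $T_1$ and $T_2$ with $m_3(z) = l_3(z) \geq k$. The computation in the proof of Lemma \ref{hbeta:lem}, reused in Theorem \ref{onlybsbeta:thm}, shows that once $m_3(z)$ is large relative to the parameters of a test point $w$, the value $d(z,w) - d(z,id)$ equals $\beta(w)$, independently of the upward path chosen in $T_3$. Since $F$ is finite, I would take $k$ to exceed the finitely many thresholds attached to the points $w \in F$; then the image of each such $z$ agrees with $\beta$ on all of $F$, and since $\beta \in B(F)$ trivially, we get $N_k(\beta) \subseteq B(F)$.

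For (II), I would first use $F_0$ to control $T_1$ and $T_2$: by Lemmas \ref{boundm12frombeta:lem} and \ref{boundT12frombeta:lem}, any $z \in DL_3(q)$ lying in $B(F_0)$ is trivial in $T_1$ and $T_2$, whence $h_3(z)=0$ and $m_3(z) = l_3(z)$. The only such points failing to lie in $N_k(\beta)$ are those with $m_3(z) = l_3(z) < k$, and for each fixed value $c < k$ there are only $q^c$ of them, so the ``unwanted'' set $Z$ is finite. For each $z \in Z$, since its image function differs from $\beta$, I pick a witness $w_z$ with $d(z,w_z)-d(z,id) \neq \beta(w_z)$ and set $F = F_0 \cup \{w_z : z \in Z\}$, a finite set. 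Then any point of $DL_3(q)$ in $B(F)$ is trivial in $T_1,T_2$ (from $F_0 \subseteq F$) and avoids $Z$ (from the witnesses), hence has $m_3 = l_3 \geq k$ and lies in $N_k(\beta)$.

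The remaining and most delicate part of (II) is to show that the only boundary point in $B(F)$ is $\beta$ itself. Given $f \in \partial DL_3(q) \cap B(F)$, choose $x_n \to f$ in $DL_3(q)$; since $F$ is finite and the functions take values in the discrete set $\integers$, the tail of $(x_n)$ lies in $B(F)$, hence in $N_k(\beta)$ by the previous paragraph, so $(x_n)$ is eventually trivial in $T_1,T_2$ with $m_3 = l_3 \geq k$. If $m_3(x_n)$ did not tend to $\infty$, a subsequence would be bounded and so land in a finite set of points, forcing a constant sub-subsequence converging to a point of $DL_3(q)$; by uniqueness of limits that point would equal $f$, contradicting $f \in \partial DL_3(q)$. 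Hence $m_3(x_n) \to \infty$, and Theorem \ref{onlybsbeta:thm} gives $f = \beta$. This compactness-and-pigeonhole step, layered on the reduction supplied by $F_0$, is where the real work lies; everything else is bookkeeping with the basis $\{B(F)\}$.
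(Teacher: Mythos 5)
Your proposal is correct, and it follows exactly the route the paper intends: the paper states this corollary without proof, leaving it as an immediate consequence of the $\{B(F)\}$ neighborhood basis, Lemmas \ref{boundm12frombeta:lem} and \ref{boundT12frombeta:lem}, the large-$n$ computation from Lemma \ref{hbeta:lem}, and Theorem \ref{onlybsbeta:thm}, which are precisely the ingredients of your mutual-refinement argument. Your compactness-and-pigeonhole step excluding spurious boundary points of $B(F)$ is the same mechanism underlying the paper's corollary that $\beta$ is isolated in $\partial DL_3(q)$, so you have simply made explicit what the paper leaves implicit.
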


Note that $\beta_n$ and $\beta$ are not the only elements of
$N_k(\beta)$, since
alternate choices of edge labels may be made.

\begin{theorem}
Let $h \in \partial DL_3(q)$ have $m_3(h) = 0$. Then $h\not \in S(\beta)$.
As a consequence, $\alpha \not \in S(\beta)$.
\end{theorem}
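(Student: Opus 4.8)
The plan is to unwind the definition of the star in terms of the neighborhood basis $\{N_k(\beta)\}$ established above and to prove the stronger fact that every point of $S(\beta)$ has $m_3 = \infty$; since $m_3(h) = 0$, this immediately gives $h \notin S(\beta)$, and applying it to $\alpha$ (whose defining sequence has $m_3(\alpha_n) = 0$, hence $m_3(\alpha) = 0$) yields the stated consequence. Writing $A_C = \bigcap_{k} \overline{H(N_k(\beta), C)}$, we have $S(\beta) = \overline{\bigcup_{C \geq 0} A_C}$, so the goal reduces to showing $\bigcup_{C} A_C \subseteq \{m_3 = \infty\}$ together with the fact that $\{m_3 = \infty\}$ is closed.

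The key estimate is a lower bound on the distance to $N_k(\beta)$. Each vertex $w \in N_k(\beta)$ satisfies $h_i(w) = 0$ for all $i$, with $m_1(w) = l_1(w) = m_2(w) = l_2(w) = 0$ and $m_3(w) = l_3(w) \geq k$, so it is exactly the type of point to which Lemma \ref{pos-horo:lem} applies. Taking $x = v$ an arbitrary vertex and $z = w$, and assuming $m_3(w) > m_3(v)$, that lemma gives $d(v, w) \geq d(v, id) + (m_3(w) - m_3(v))$, since the only nonzero constant is $c_3 = m_3(w) - m_3(v)$. Minimizing over $w \in N_k(\beta)$ yields $d(v, N_k(\beta)) \geq d(v, id) + (k - m_3(v))$ whenever $k > m_3(v)$. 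Hence if $v \in H(N_k(\beta), C)$, i.e. $d(v, N_k(\beta)) \leq d(v, id) + C$, then $k - m_3(v) \leq C$; combining this with the trivial case $m_3(v) \geq k$ gives the clean statement that every vertex in $H(N_k(\beta), C)$ satisfies $m_3(v) \geq k - C$.

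To pass from vertices to closures and boundary points, I would use the test points $\zeta^3_j$ of Definition \ref{zeta:def}. Applying Lemmas \ref{nonpos-horo:lem} and \ref{pos-horo:lem} to $\zeta^3_j$ shows that a vertex $v$ satisfies $d(v, \zeta^3_j) - d(v, id) \leq 0$ precisely when $m_3(v) \geq j$, and passing to limits gives, for every $g \in \overline{DL_3(q)}$, the equivalence $g(\zeta^3_j) \leq 0 \iff m_3(g) \geq j$. Since evaluation $g \mapsto g(\zeta^3_j)$ is continuous in the compact-open topology, the set $\{g : m_3(g) \geq j\} = \{g : g(\zeta^3_j) \leq 0\}$ is closed. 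The halfspace bound now reads $H(N_k(\beta), C) \subseteq \{g : g(\zeta^3_{k-C}) \leq 0\}$ on vertices, so taking closures of both sides gives $\overline{H(N_k(\beta), C)} \subseteq \{m_3 \geq k - C\}$. Intersecting over $k$ yields $A_C \subseteq \bigcap_k \{m_3 \geq k - C\} = \{m_3 = \infty\}$; as the latter is closed, $S(\beta) = \overline{\bigcup_C A_C} \subseteq \{m_3 = \infty\}$, and the theorem follows.

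I expect the main obstacle to be precisely the interchange of the intersection over $k$ with the closure operation. A single vertex need not lie in all the halfspaces $H(N_k(\beta), C)$ simultaneously (indeed neither the $\beta_n$ nor the $\gamma_n$ do, even though their limits lie in the star), so the relevant content lives on the boundary and cannot be extracted by a vertex-by-vertex argument inside the intersection. Routing the per-$k$ estimate through the closed sets $\{g : g(\zeta^3_j) \leq 0\}$ is what lets the bound survive passage to the closure, and making the $m_3$-detection by $\zeta^3_j$ rigorous in the limit for horofunctions is the delicate step.
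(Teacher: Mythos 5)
Your proposal is correct, but it takes a genuinely different route from the paper's own proof. The paper argues sequentially: since $m_3(h)=0$, $m$-invariance gives every vertex sequence approaching $h$ a tail $(a_n)$ with $m_3(a_n)=0$; Lemma \ref{pos-horo:lem} then shows $d(a_n,N_k(\beta))\geq d(a_n,id)+k$, so these vertices miss $H(N_k(\beta),C)$ once $k>C$, whence $h\notin\bigcap_{k}\overline{H(N_k(\beta),C)}$; and the outer closure is handled by Corollary \ref{mconv:cor}, which forces any sequence of horofunctions approaching $h$ to eventually have $m_3=0$ as well. You instead prove the stronger containment $S(\beta)\subseteq\{m_3=\infty\}$ by sandwiching each halfspace inside a closed test set: the vertex-level equivalence $d(v,\zeta^3_j)\leq d(v,id)\iff m_3(v)\geq j$ (exactly the dichotomy of Lemmas \ref{nonpos-horo:lem} and \ref{pos-horo:lem}, and the same $\zeta$-point device the paper uses to prove Lemma \ref{mlim:lem}) transfers to all of $\overline{DL_3(q)}$ because evaluation at $\zeta^3_j$ is continuous in the compact-open topology and $\integers$-valued convergence is eventually constant, so $\{g : g(\zeta^3_j)\leq 0\}=\{g : m_3(g)\geq j\}$ is closed and $\overline{H(N_k(\beta),C)}\subseteq\{m_3\geq k-C\}$. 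Both proofs run on the same distance engine; the difference is where the closure operations are absorbed. Your closed-test-set argument handles the two nested closures in $S(\beta)=\overline{\bigcup_{C\geq 0}\bigcap_{k}\overline{H(N_k(\beta),C)}}$ uniformly, and in particular it automatically disposes of mixed sequences of vertices and horofunctions approaching $h$ from the union, a case the paper's prose treats only implicitly by splitting into the two pure cases; it also yields the sharper conclusion that $S(\beta)$ consists only of boundary points with $m_3=\infty$, which is consistent with (and explains) the $\gamma$-examples the paper notes lie in $S(\beta)$. What the paper's route buys is brevity and direct reuse of Corollary \ref{mconv:cor}, at the cost of proving only non-membership for points with $m_3(h)=0$. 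Two small points to make explicit in a final write-up: when $k\leq C$ the test point $\zeta^3_{k-C}$ is undefined, but there the bound $m_3\geq k-C$ is vacuous and the intersection over $k>C$ already forces $m_3=\infty$ for each fixed $C$; and in minimizing over $w\in N_k(\beta)$ you should note (as you essentially do) that when $m_3(v)<k$ the hypothesis of Lemma \ref{pos-horo:lem} holds for every vertex $w\in N_k(\beta)$ simultaneously, while the boundary point $\beta\in N_k(\beta)$ is irrelevant to the distance $d(v,N_k(\beta))$.
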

\begin{proof}

By $m$-invariance, any sequence of points in $DL_3(q)$ approaching $h$ has a
tail $(a_n)$ such that for each $n$, $m_3(a_n) = 0$. 
For any $C\geq 0$, when $k > C$,  the structure of $N_k(\beta)$ and Lemma
\ref{pos-horo:lem} together imply that 
\[d(a_n,N_k(\beta)) \geq d(a_n,id) + k > d(a_n,id) + C.\] 
So  $a_n \not \in H(N_k(\beta),C)$, and so $h \not \in
\cap_{k>0} \overline{H(N_k(\beta),C)}$.

By Corollary \ref{mconv:cor}, any sequence $(h_n)$ of horofunctions approaching
$h$ must also eventually have $m_3(h_n) =0$ as well, so $h$ is also not a limit
of any sequence of horofunctions in the union of $\bigcap_{k>0}
\overline{H(N_k(\beta),C))}$ over all $C$.


Thus 
\[ h \not \in \overline{\bigcup_{C\geq0}\left(\bigcap_{k>0}
\overline{H(N_k(\beta),C)}\right)} =
S(\beta).\] 

\end{proof}

\bibliographystyle{plain}
\bibliography{non-symmetric-star}

\begin{thebibliography}{1}

\bibitem{horoprod}
Laurent Bartholdi, Markus Neuhauser, and Wolfgang Woess.
\newblock Horocyclic products of trees.
\newblock {\em J. Eur. Math. Soc. (JEMS)}, 10(3):771--816, 2008.

\bibitem{bh}
Martin~R. Bridson and Andr{\'e} Haefliger.
\newblock {\em Metric spaces of non-positive curvature}, volume 319 of {\em
  Grundlehren der Mathematischen Wissenschaften [Fundamental Principles of
  Mathematical Sciences]}.
\newblock Springer-Verlag, Berlin, 1999.

\bibitem{diestelleader}
Reinhard Diestel and Imre Leader.
\newblock A conjecture concerning a limit of non-{C}ayley graphs.
\newblock {\em J. Algebraic Combin.}, 14(1):17--25, 2001.

\bibitem{eskinfisherwhyte}
Alex Eskin, David Fisher, and Kevin Whyte.
\newblock Coarse differentiation of quasi-isometries {I}: {S}paces not
  quasi-isometric to {C}ayley graphs.
\newblock {\em Ann. of Math. (2)}, 176(1):221--260, 2012.

\bibitem{karlssondynamics}
Anders Karlsson.
\newblock On the dynamics of isometries.
\newblock {\em Geom. Topol.}, 9:2359--2394, 2005.

\bibitem{steintaback}
M.~{Stein} and J.~{Taback}.
\newblock {Metric Properties of Diestel-Leader Groups}.
\newblock {\em Michigan Math. J.}, 62(2):365--286, 2013.

\end{thebibliography}

\end{document}